\documentclass[11pt,twoside]{amsart}
\usepackage{amsmath,hyperref,amstext,amssymb,amsthm,mathrsfs,eurosym,dsfont}
\usepackage{xcolor}
\usepackage{enumerate}   
\usepackage{enumitem}
\usepackage{tikz}
\usepackage{booktabs}
\usepackage[a4paper,hcentering,vcentering,margin = 1in]{geometry}
\usepackage{comment}

\newtheorem{theorem}{Theorem}[section]
\newtheorem{lemma}[theorem]{Lemma}

\theoremstyle{definition}

\newtheorem{assumption}[theorem]{Assumption}

\newtheorem{remark}[theorem]{Remark}

\numberwithin{equation}{section}
\allowdisplaybreaks

\newcommand{\w}{\varpi}
\newcommand{\wmf}{\varpi^{\text{mf}}}
\newcommand{\E}{\mathbb{E}}
\newcommand{\F}{\mathbb{F}}
\newcommand{\G}{\mathbb{G}}
\newcommand{\R}{\mathbb{R}}

\newcommand{\ItF}{\mathcal{F}}
\newcommand{\prob}{\mathbb{P}}
\newcommand{\setup}{(\Omega,\mathcal{F},\mathbb{P},\mathbb{F})}

\newcommand{\cadlag}{c\`{a}dl\`{a}g }

\renewcommand{\bar}{\overline}
\renewcommand{\hat}{\widehat}
\renewcommand{\tilde}{\widetilde}

\newcommand{\avg}{\frac{1}{N}\sum_{j = 1}^N}

\newcommand{\X}{\mathcal{X}}

\title[Mean-field model for pollution abatement through ETS system]{Mean-field model for pollution abatement via cap and trade mechanism}

\author[O. Bonesini]{Ofelia Bonesini}\address{London School of Economics and Political Science, LSE, London}\email{o.bonesini@lse.ac.uk}

\author[G. Lanaro]{Giacomo Lanaro}\address{Prometeia SPA, Milan, Italy}\email{giacomo.lanaro94@gmail.com}

\date{\today}

\begin{document}

\begin{abstract}

We consider a mean-field model of competitive firms operating under an AK production technology, where output is proportional to capital and production generates emissions. In this setting, we introduce a regulator whose objective is the reduction of cumulative emissions, in the spirit of Emission Trading Systems (ETS), where firms must hold and trade permits to cover their emissions in a regulated market.
The regulator acts as a central planner and controls the supply of permits, balancing emission reduction and aggregate output. Permits are allocated through a dynamic auction mechanism that adjusts supply in real time to achieve both market efficiency and the regulator’s long-term goals. The regulator and the firms interact through the endogenous permit price, which is determined by the regulator’s policy and affects firms’ optimal strategies via the market clearing condition.
The regulator’s optimal policy is derived within a Mean-Field Control (MFC) framework. Exploiting the linear-quadratic structure and the presence of common noise, we characterise the equilibrium via a system of coupled FBSDEs and associated Riccati equations.
Our results provide a tractable characterisation of optimal permit allocation policies in large economies and offer insights into the design of efficient emission trading mechanisms.
\end{abstract}

\maketitle
\section{Introduction}
\label{section: introduction}
Addressing the environmental impact of industrial production has become a cornerstone of modern economic policy. Firms’ emissions contribute to a range of negative externalities, from environmental degradation to public health concerns, underscoring the need for effective regulatory interventions. Among the policy instruments available, emission trading systems (ETS), such as the European Union Emission Trading Scheme (EU-ETS), provide a market-based approach for controlling pollution. By setting a cap on total emissions and allowing firms to trade permits, ETS frameworks incentivise firms to balance production decisions with environmental considerations while maintaining economic efficiency.

ETS have emerged as one of the most widely adopted market-based instruments for reducing greenhouse gas (GHG) emissions. Originating from the idea of creating a tradable right to pollute under an overall emissions cap, these systems allow policymakers to achieve environmental targets while preserving flexibility for regulated entities. Over the past two decades, the use of ETS has expanded far beyond the European Union. According to the International Carbon Action Partnership (ICAP), more than thirty jurisdictions worldwide, including national systems in countries such as China, South Korea, and the United Kingdom, as well as regional initiatives such as the Western Climate Initiative (WCI) in North America, have implemented or are developing their own trading schemes. Together, these systems now cover around one-sixth of global greenhouse gas emissions.\footnote{See the ICAP Status Report (2022), \emph{Emissions Trading Worldwide}, available at \href{https://icapcarbonaction.com/en/publications/emissions-trading-worldwide-2022-icap-status-report}{icapcarbonaction.com}.}

Despite differences in design, such as sectoral coverage, allocation methods, or price stabilisation mechanisms, all ETS share a common economic rationale: by assigning a price to carbon, they internalise the externality associated with emissions and promote cost-effective abatement. This global proliferation highlights both the flexibility and the policy relevance of ETS as a central component of climate strategies. 

While a growing literature applies mean-field games to energy transition, including \cite{chan2017fracking} on fracking and renewables, \cite{carmona2022mean} on carbon regulation with deep learning methods, and \cite{dumitrescu2024energy} on optimal stopping under scenario uncertainty, most existing works treat the regulator as a passive actor who sets a fixed policy or price, leaving firms' responses as the central question. The regulator's own optimisation problem remains largely unexplored in the mean-field setting, and it is precisely this gap that our work addresses.

In particular, the mathematical study of permit markets for carbon reduction dates back to the work~\cite{carmona2009optimal} and the follow up paper~\cite{carmona2010market}. In the first paper, the authors study optimal stochastic control and carbon price formation (firms minimise abatement costs plus a terminal penalty under a given cap, and the equilibrium price is derived in closed form), while, in the second, they provide a comprehensive framework for the design of emissions trading markets, comparing different allocation rules.
More recently, Kollenberg and Taschini~\cite{kollenberg2019dynamic} studied the Market Stability Reserve (MSR) and analysed how the MSR affects price dynamics and banking behaviour.
All these models either take the regulatory framework as given, the cap, the allocation rule, or the MSR parameters are fixed, and they study the firms' equilibrium response. The regulator's optimisation problem is either absent or treated separately from the market equilibrium. \\
Our paper departs from this by making the regulator an active optimiser within the model. To this extent, the paper closest in spirit to ours on the regulatory side is the work of A\"id and Biagini~\cite{aid2023optimal}. They consider a regulator who dynamically allocates emission allowances to a set of firms. Firms face both idiosyncratic and common economic shocks on emissions, and they have linear-quadratic abatement costs.
The key modelling choice in this paper is a Stackelberg game structure: the regulator leads by choosing the allocation policy, and firms follow by optimising their abatement and trading strategies. Using variational methods, they first derive the market equilibrium in closed form as a function of the regulator's allocation, and then they solve the regulator's problem. 
Now, the main difference with our approach is the following. A\"id and Biagini work with a finite number of firms, and the regulator directly allocates allowances to each firm. In our model, we work with a continuum of firms (a mean-field limit) and the regulator does not allocate permits directly to firms. Instead, she issues permits into the market through an auction, and the price emerges endogenously from a market clearing condition. So the interaction between the regulator and the firms is mediated entirely by the permit price, which is a more realistic description of how the EU-ETS actually works.

Another important strand of literature that we build on is the work on mean-field games with endogenous prices determined by market clearing. The key references here are two papers by Fujii and Takahashi \cite{fujii2022equilibrium, fujii2022mean}. The first studies equilibrium pricing under a market clearing condition in a general MFG setting, showing that the equilibrium price can be characterised by a McKean–Vlasov FBSDE with common noise and proving convergence of the net order flow to zero in the large-$N$ limit. The second extends this framework to a setting with a major player who influences the price, yielding a Stackelberg-type interaction between a large player and a continuum of small players; this is conceptually related to our regulator, although the specific structure differs. On the theoretical side, we rely heavily on the foundational work of Carmona, Delarue, and Lacker \cite{carmona2016} which developed the existence and uniqueness theory for mean-field games with common noise using strong and weak solutions and proved an analogue of the Yamada–Watanabe theorem, precisely the tool we use to establish that the filtering error is independent of the regulator's control. 

The most direct predecessor of our work is the paper by Del Sarto, Leocata, and Livieri \cite{del2024mean}, which develops a mean-field game of competitive firms producing similar goods according to a standard AK model: exactly the production technology we adopt, with capital generating pollution as a by-product. They study a cap-and-trade regulation in which firms respond by implementing pollution abatement, reducing output, and trading permits, while a regulator dynamically allocates emission allowances. The key result of their paper is that the resulting MFG is of linear-quadratic type and equivalent to a mean-field type control problem, making it a potential game; they obtain explicit solutions through Riccati equations and characterise the carbon emission equilibrium price satisfying the market clearing condition via a McKean–Vlasov FBSDE with common noise. What their framework does not address, however, is the regulator's optimisation problem: in their model, the permit allocation is taken as given, and the regulator does not actively choose how many permits to issue. Our paper builds directly on their framework and closes this loop, taking the equilibrium characterised by Del Sarto et al.\ as the state dynamics for the regulator's problem and solving for the optimal permit policy within the mean-field control framework.

In this paper, we propose a novel framework that places the regulator’s decision-making process at the centre of the analysis. We consider a family of competitive firms operating under an AK production model, where output is directly proportional to the capital stock. Each firm’s production generates emissions and requires permits to operate, creating a direct dependence between the firms’ decisions and the number of permits issued by the regulator. The regulator, acting as a central planner, faces a fundamental trade-off: minimising average emissions across the market while maximising the aggregate output of firms. This problem is particularly challenging because the regulator’s control decisions, namely the quantity of permits issued at each time influence the firms’ strategies, which, in turn, shape market dynamics.

To address this, we frame the regulator’s decision-making as a stochastic optimal control problem. The state variable in the regulator’s problem is the solution to a system of forward-backward stochastic differential equations (FBSDEs), which describes the equilibrium behaviour of a typical firm in the market. Specifically, the forward component represents the dynamics of the firm’s production and emissions, while the backward component captures the optimal response of firms to permit allocations. By incorporating this feedback loop, we provide a unified framework that integrates the regulator’s objectives with the equilibrium strategies of competitive firms.

A key feature of our model is the real-time auction system through which the permits are allocated. This system dynamically adjusts the supply of permits to balance market efficiency with the regulator’s long-term objectives. The auction mechanism ensures that the firms’ decisions remain aligned with the regulator’s control strategy, providing a robust tool for managing the inherent uncertainties in market and environmental conditions.

The regulator’s optimal strategy is derived within the framework of Mean Field Control (MFC). By leveraging the linear-quadratic structure of the problem and incorporating common noise to model external uncertainty, we characterise the solution using a system of FBSDEs and Riccati equations. Our analysis provides both theoretical insights and practical guidelines for designing optimal regulatory policies in emission trading systems. 

This work offers a novel perspective on the interaction between decentralised decision-making by competitive firms and centralised regulation by an environmental authority. By coupling the firms’ equilibrium problem with the regulator’s stochastic optimal control problem, we shed light on the trade-offs inherent in emission trading systems and highlight the potential of real-time auction mechanisms for achieving both environmental and economic goals.

The paper is structured as follows. In Section~\ref{sect_model_N}, we consider the set-up for the N-player model of firms. In particular, given the potential structure of the game we find an explicit solution and we exploit this in Subsection~\ref{subsect_model_N:regulator_rewritten} to rewrite the regulator's problem introduced in Subsection \ref{subsect_model_N:regulator}. Section \ref{section: mean field approach} focuses on the mean-field limit approximation of the market, with particular attention, in Subsection \ref{subsection_mfg_regulator}, on the consequences in the structure of the market clearing condition in the limit.
In Section \ref{section_solution_MF_regulator_cpo}, we analyse the MF optimal control problem of the regulator whose state dynamics is given by a controlled FBSDEs system. In addition, we provide a characterisation for the solution in terms of  systems of Riccati ODEs. Finally, in Section \ref{section: numerical_experiments}
we exploit the explicit characterizations obtained in the previous sections to illustrate via carefully selected numerical illustration our findings. 
For the sake of completeness, the solution for the firms' problem in the unregulated-market case and a step-by-step self-consistent route to implementations are included in Appendix~\ref{app: unregulated market} and Appendix~\ref{app: sbs implementation} respectively.

\subsection*{Notation}
We denote a given filtered probability space with the standard notation $\setup$. We denote with $\mathcal{G}\otimes\mathcal{H}$ the product sigma-algebra generated by two sigma-algebras $\mathcal{G}$ and $\mathcal{H}$. 

For every probability space $(\Omega,\mathbb{G},\prob)$ endowed with a filtration $\mathbb{G}:= (\mathcal{G}_t)_{t\in[0,T]}$ we refer to:
\begin{itemize}
\item $\mathbb{L}^2(\G;\R)$ as the set of real valued $\G$-measurable square integrable random variables;
\item $\mathbb{S}^2(\mathbb{G};\R)$ as the set of real valued $\mathbb{G}$-adapted \cadlag  processes $X$ satisfying:
\begin{displaymath}
||X||_{\mathbb{S}^2(\mathbb{G};\R)}:= \E\bigg[ \sup_{t\in[0,T]} |X_t|^2 \bigg]^{\frac{1}{2}}<\infty;
\end{displaymath}
\item $\mathbb{H}^2(\mathbb{G};\R)$ is the set of real valued $\mathbb{G}$-progressively measurable processes $Z$ satisfying:
\begin{displaymath}
||Z||_{\mathbb{H}^2(\mathbb{G};\R)} := \E\bigg[ \bigg( \int_0^T |Z_t|^2 dt \bigg) \bigg]^{\frac{1}{2}}<\infty.
\end{displaymath}
\end{itemize}

\section{The model setup}\label{sect_model_N} 
\subsection{The companies problem}\label{subsect_model_N:companies} 
We consider a family of $N$ firms which populate a market over a finite time interval $[0,T]$. These firms follow an AK model, where output is proportional to capital, implying constant returns to capital, to maximise the production of a given product. We suppose that each firm $i \in \{1, \dots, N\}$ can choose to create new capital $K^i$ using a mix of fossil-fuel and green energy based level of capital. In particular, we assume that the capital level of firm $i$ at time $t$, denoted by $K^i_t$, follows the SDE:
\begin{equation}
\label{eqn: K dynamics}
\begin{cases}
dK^i_t = (a^f \alpha^{f,i}_t + a^g \alpha^{g,i}_t - \delta K^i_t) dt + \sigma^K K^i_t dW^{K,i}_t,\\
K^i_0 = K^0,
\end{cases}
\end{equation}
where $a^f,a^g,\delta, \sigma^K, K^0$ are positive constants. The quantities $\alpha^{f,i}$ and $\alpha^{g,i}$ represent the amount of fossil-fuel and green energy-based level of capital used by firm $i$ for capital creation. The parameter $\delta$ represents the depreciation rate of capital, while $\sigma^KK^i$ represents the standard deviation of the level of capital of firm $i$. The Brownian motions $(W^{K,i})_{i = 1,\dots,N}$ are supposed to be independent.
$\alpha^{f,i}$ and $\alpha^{g,i}$ represent the controls of firm $i$ on capital production. 

The process of capital production generates emissions, that are related to the fossil-fuel based level of capital applied by the companies. Denoting by $E^i_t$ the emission level of firm $i$ in the time interval $[0,t]$, we suppose that $E^i$ follows the dynamics
\begin{equation}
\label{eqn: E dynamics}
\begin{cases}
dE^i_t = (a^e \alpha^{f,i}_t - \alpha^{e,i}_t) dt + \sigma^E(\rho dW^{E,i}_t + \sqrt{1- \rho^2} dW^0_t),\\
E^i_0 = E^0,
\end{cases}
\end{equation}
where $a^e,\rho, \sigma^E, E^0$ are positive constants, $W^0$ is a Brownian motion independent of $W^{K,i}$ for every $i = 1,\dots,N$ and $(W^{E,j})_{j=1,\dots,N}$ is a family of pairwise independent Brownian motions, independent of $W^0$ and $W^{K,i}$ for every $i = 1,\dots,N$. $W^{E,i}$ is the idiosyncratic noise of the firm $i$ associated with emission level, while $W^0$ represents common shocks which affect all the companies' emission rate. The control $\alpha^{e,i}$ represents the effort rate implemented by company $i$ to reduce the emissions. As a consequence, we are assuming that $\{(W^{K,i},W^{E,i},W^{0})\}_{i=1,\dots,N}$ is a family of 3-dimensional Brownian motions.

To comply with the ETS-based carbon reduction policy, each company must maintain an allowance account in which emission permits are stored. The permits held in this account determine the amount of emissions that the company is allowed to produce. The dynamics of the allowance account of company $i$ are described by the stochastic process $X^i$, solution to the following SDE:

\begin{equation}
\label{eqn: bank account company i}
\begin{cases}
dX^i_t &= \alpha^{x,i}_t dt- dE^i_t,\\
&= (\alpha^{x,i}_t - a^e \alpha^{f,i}_t + \alpha^{e,i}_t) dt -\sigma^E(\rho dW^{E,i}_t + \sqrt{1- \rho^2} dW^0_t),\\
X^i_0 &= X^0 := A^0 - E^0.
\end{cases} 
\end{equation}
In the previous equation, $\alpha^{x,i}_t$ represents the number of permits traded by company $i$ in the infinitesimal interval $[t,t+dt]$, while $A^0$ represents the initial number of permits owned by the company. Hence, the emission level $E^i_t$ is measured in terms of the number of permits that the $i$-th company consumes in the time interval $[0,T]$ to create capital exploiting fossil-fuel-based energy. 

We introduce the cost function that has to be minimised by firms. In analogy to \cite{del2024mean}, we consider quadratic costs for capital production and abatement effort:
\begin{equation*}
C^p(\alpha^{p,i}_t):= c_{1,p} \alpha^{p,i}_t + c_{2,p} (\alpha^{p,i}_t)^2,\quad t\in[0,T],\quad p = f,g,e,
\end{equation*}
where $c_{i,p}>0$ for every $i = 1,2$ and $p = f,g,e$. 
We denote the empirical mean of a sequence of random variables $\underline{A}:= (A^1,\dots,A^N)$ by
\begin{equation}
\label{eqn: empirical mean notation}
\mathfrak{m}^{(N)}(\underline{A}) := \frac{1}{N} \sum_{j = 1}^N A^j.
\end{equation}
We suppose that firm $i$ faces a linear inverse demand function defined by
\begin{equation}
\label{eqn: price good economy n}
p(K^i_t,\mathfrak{m}^{(N)}(\underline{K}_t)):= a- b(1-\gamma) AK^i_t - b\gamma A\mathfrak{m}^{(N)}(\underline{K}_t),\quad t\in[0,T],
\end{equation}
where $\underline{K}_t:= (K^1_t,\dots, K^N_t)$, $a,b>0$ and $\gamma \in[0,1]$ describes the degree of production substitution, while $A$ represents the technological level so that $AK^i_t$ is the production function of the firm $i$.

Finally, we suppose that the trading activities of firm $i$, necessary to provide the best amount of permits needed at any time is linear quadratic in the number of permits $\alpha^{x,i}_t$ traded at every time $t\in[0,T]$. The trading cost is supposed to be: 
\begin{equation}
\label{eqn: quadratic cost functional}
C^x(\alpha^{x,i}_t,\w_t) := \w_t \alpha^{x,i}_t + c_{2,x} (\alpha^{x,i}_t)^2,\quad t\in[0,T],
\end{equation}
where $c_{2,x}>0$ represents the transaction costs for trading activities and $\w_t$ is the market price of the permits at time $t\in[0,T]$. We make now the following assumption
\begin{assumption}
\label{ass: price taker companies}
In analogy to \cite{del2024mean}, we assume that no company is large enough to influence the price process $\w$. In other words, all companies are assumed to be \emph{price takers}, and $\w$ is therefore regarded as an exogenous stochastic process from the perspective of the firms, possibly correlated with all sources of randomness in the market.
\end{assumption}
To be more rigorous, we introduce the model setup. It is given by a filtered probability probability space $(\Omega,\ItF,\prob,\F)$, on which the following processes are defined:
\begin{itemize}
\item A one-dimensional Brownian motion $W^0$, playing the role of the common noise;
\item A family of $N$ idiosyncratic noises describing the randomness in the production dynamics $\underline{W}^{N;K} := (W^{K,1},\dots ,W^{K,N})$.
\item A family of $N$ idiosyncratic noises describing the randomness in the emission dynamics $\underline{W}^{N;E} := (W^{E,1},\dots ,W^{E,N})$.
\end{itemize}
We suppose that $(W^0,\underline{W}^{N;K},\underline{W}^{N;E})$ is a $(2N+1)$-dimensional Brownian motion.

For the moment, the price $\w$ is a stochastic process defined on $(\Omega,\ItF,\prob,\F)$. It is natural to suppose that each company $i$ has access to the information given by $W^0$, $\w$, and $W^i:=(W^{K,i},W^{E,i})^\top$. As a consequence, adopting the notation
\begin{align*}
v^i_t 			&:= (\alpha^{f,i}_t,\alpha^{g,i}_t,\alpha^{e,i}_t,\alpha^{x,i}_t),\quad t\in[0,T],\\
\mathcal{X}^i_t &:= (K^i_t, X^i_t), \quad t\in[0,T],
\end{align*}
for respectively the control and the state variable of firm $i = 1,\dots,N$, every firm has to minimise the following target function
\begin{align}
\label{eqn: target companies N}
&J(v^i;\mathcal{X}^i,\mathfrak{m}^{(N)}(\underline{K}), \w) \\
\nonumber& \quad:= \E\Bigg[\int_0^T \bigg[-p(K^i_t,\mathfrak{m}^{(N)}(\underline{K}_t)) AK^i_t + \alpha^{x,i}_t \w_t + c_{2,x}(\alpha^{x,i}_t)^2 + \sum_{p = f,g,e}C^p(\alpha^{p,i}_t)\bigg] dt + \lambda (X^i_T)^2\Bigg],
\end{align}
where $-p(K^i_t,\mathfrak{m}^{(N)}(\underline{K}_t)) AK^i_t$ represents the revenues coming from capital production, and $\lambda(X^i_T)^2$ is a final penalisation term present to avoid accumulation of permits in the bank accounts of the firms. In addition, we suppose that the class of admissible controls for the company $i$-th is
\begin{equation*} 
v^i \in \mathbb{A}^i := \mathbb{H}^2( \F^{W^0, W^{K,i}, W^{E,i} ,\w};\R^4),\quad i = 1,\dots, N.
\end{equation*}

Finally, the state variable of firm $i$ is fully determined by the following two dimensional SDE: 
\begin{equation*}
\begin{cases}
dK^i_t &= (a^f \alpha^{f,i}_t + a^g \alpha^{g,i}_t - \delta K^i_t) dt + \sigma^K K^i_t dW^{K,i}_t, \quad K^i_0=K^0.\\
dX^i_t &= (\alpha^{x,i}_t - a^e \alpha^{f,i}_t + \alpha^{e,i}_t) dt -\sigma^E(\rho dW^{E,i}_t + \sqrt{1- \rho^2} dW^0_t), \quad X^i_0=X^0.\\
\end{cases}
\end{equation*}
Adopting the notation of \eqref{eqn: empirical mean notation}, for every $\mathcal{X}^i\in \R^2,\  v^i\in \R^4$, for all $i = 1,\dots,N$, for all $\w\in\R$ we define:
\begin{align*}
f(\mathcal{X}^i,v^i;\mathfrak{m}^{(N)}(\mathcal{X}),\w) &:= -p(K,\mathfrak{m}^{(N)}(\underline{K})) AK^i +\sum_{p \in\{ f,g,e,x\}} C^p(\alpha^{p,i}) \\
&\qquad-\Big[ a-b(1-\gamma)AK^i - b\gamma A\mathfrak{m}^{(N)}(\underline{K})\Big]AK^i\\
&\qquad+  \sum_{p \in \{f,g,e\} } \Big[c_{1,p}\alpha^{p,i} + c_{2,p} (\alpha^{p,i})^2\Big]+ \w\alpha^{x,i} + c_{2,x} (\alpha^{x,i})^2\\
 &= \langle Q_1 \mathcal{X}^i ,\mathcal{X}^i\rangle+ 2\langle {Q}_2 \mathfrak{m}^{(N)}(\underline{\mathcal{X}}), \mathcal{X}^i\rangle + \langle Q_3 v^i, v^i\rangle + 2\langle q_1, \mathcal{X}\rangle + 2 \langle q_2(\w),v^i\rangle.\\
g(\mathcal{X}^i)&:= \langle Q_4\mathcal{X}^i, \mathcal{X}^i\rangle.
\end{align*}
where $Q_1,Q_2,Q_4\in M_{2,2}(\mathbb{R})$ are positive semi-definite, $Q_3\in M_{4,4}(\mathbb{R})$ is positive definite, $q_1\in\mathbb{R}^2$ and $q_2(\w)\in\mathbb{R}^4$, the latest depending on $\w$:
\begin{align*}
Q_1&:= 
\begin{pmatrix} 
b(1-\gamma) A^2 & 0 \\
0&0
\end{pmatrix},
\quad {Q}_2 :=
\begin{pmatrix} 
\frac{1}{2}b\gamma A^2 &0\\
0&0
\end{pmatrix},
\  Q_3:= 
\begin{pmatrix} 
c_{2,f} 	&0			&0			&0\\
0		&c_{2,g} 		&0			&0\\
0		&0			&c_{2,e}		&0\\
0		&0			&0			&c_{2,x}
\end{pmatrix},
\\
Q_4&:=
\begin{pmatrix}
 0&0\\
 0&\lambda
 \end{pmatrix} ,\ 
q_1 := 
\begin{pmatrix} 
-\frac{1}{2}aA \\
0
\end{pmatrix}
\  q_2(\w) := 
\begin{pmatrix}
\frac{1}{2}c_{1,f}\\
\frac{1}{2}c_{1,g}\\
\frac{1}{2}c_{1,e}\\
\frac{1}{2}\w
\end{pmatrix}.
\end{align*}
On the other hand, the state variable of the firm $i$ can be rewritten as in \cite{del2024mean}:
\begin{equation}
\label{eqn: state variable N}
\begin{cases}
d\mathcal{X}^i_t = \big( A_1 \mathcal{X}^i_t + A_2 v^i_t\big) dt + \sum_{j \in\{ K,E\}} (A^j_3\mathcal{X}^i_t+ A^j_4)dW^{j,i}_t + A_5dW^0_t,\\
\mathcal{X}^i_0 = (K_0,X_0),
\end{cases}\quad t\in[0,T],
\end{equation}
where for the company $i$, $W^i := (W^{K,i}, W^{E,i})$ and
\begin{align*}
A_1 &:= 
\begin{pmatrix}
-\delta	&0	\\
0		&0	
\end{pmatrix},
\ 
A_2 = 
\begin{pmatrix} 
a^f	&a^g		&0	&0\\
-a^e	&0		&1	&1
\end{pmatrix},
\ A^K_3=
\begin{pmatrix} 
\sigma^K	&0	\\
0		&0
\end{pmatrix},\\
\ A^K_4 &= \mathds{O},
\ A^E_3 = \mathds{O}, \  A^E_4 = \begin{pmatrix}
0\\
-\sigma^E\rho
\end{pmatrix},\
A_5 = \begin{pmatrix}
0\\
-\sigma^E\sqrt{1-\rho^2}
\end{pmatrix}.
\end{align*}
In analogy to \cite{del2024mean}, we denote the volatility function by
\begin{align*}
    \sigma(\mathcal{X}) &:= 
    \begin{pmatrix}
        A^K_3\mathcal{X} & A^E_4  
    \end{pmatrix}
    =
    \begin{pmatrix} 
    \sigma^K K	&0	\\
    0		& -\sigma^E \rho
    \end{pmatrix}\in M_{2\times 2}(\R) ,\\  
    \sigma^0 &:= A_5 \in M_{2\times 1}(\R).
\end{align*}
By Lemma \cite[Lemma 1.56]{carmona2018probabilistic2} and \cite[equation (6.82)]{carmona2018probabilistic2}, for all processes $\bar{\xi}, \w \in \mathbb{H}^2([0,T],\R)$, there exists a unique control $\widehat{v}^i$ such that $J(\widehat{v}^i;\mathcal{X}^i,\bar{\xi},\w)= \inf_{v \in \mathbb{H}^2(\mathbb{F}^i;\mathbb{R}^4)}J(v;\mathcal{X}^i,\bar{\xi},\w)$. 
Indeed, for every
\begin{equation*}
    \mathcal{Y}: = \begin{pmatrix}V\\ Y\end{pmatrix}\in\R^2,\quad Z:= \begin{pmatrix} Z^{1,1}&Z^{1,2}\\Z^{2,1}&Z^{2,2}\end{pmatrix}\equiv \begin{pmatrix}
        Z^K, Z^E
    \end{pmatrix}\in M_{2\times2}(\R) ,\quad Z^0 := \begin{pmatrix}Z^{0,1}\\ Z^{0,2}\end{pmatrix} \in \R^2,
\end{equation*}
the full Hamiltonian satisfies the following structure: 
\begin{align*}
&H(v,\mathcal{X}, \mathcal{Y},Z^0,Z;\bar{\xi},\w)= \\
&\qquad =  \langle A_1 \mathcal{X} + A_2v,\mathcal{Y}\rangle + f(\mathcal{X}, v; \bar{\xi}, \w)	+ \text{trace}\bigg\{\bigg( \sigma(\mathcal{X}),A_5\bigg)\begin{pmatrix}
    Z^\top\\
    (Z^0)^\top\end{pmatrix}\bigg\}\\
&\qquad= \langle A_1 \mathcal{X} + A_2v,\mathcal{Y}\rangle + f(\mathcal{X}, v; \bar{\xi}, \w)	+ \text{trace}\bigg( \sigma( \mathcal{X} ) Z^\top +  A_5(Z^0)^\top\bigg) 	\\
&\qquad= (a^f\alpha^{f} + a^g\alpha^{g}- \delta K)V + (\alpha^x -a^e\alpha^f + \alpha^e) Y + \langle Q_1 \mathcal{X} ,\mathcal{X}\rangle+ 2\langle Q_2\overline{\xi}, \mathcal{X}\rangle + \langle Q_3 v, v\rangle  \\
&\qquad\qquad+2\langle q_1, \mathcal{X}\rangle + 2 \langle q_2(\w),v\rangle + \text{trace}\bigg\{ 
\begin{pmatrix}
    \sigma^K K Z^{1,1} & \sigma^K K Z^{2,1}\\
    -\sigma^E \rho Z^{1,2} & -\sigma^E \rho Z^{2,2} 
\end{pmatrix} \\
&\qquad\qquad+ 
\begin{pmatrix}
    0&0\\
    -\sigma^E\sqrt{1-\rho^2}Z^{0,1}&-\sigma^E\sqrt{1-\rho^2}Z^{0,2}
\end{pmatrix}\bigg\}\\ 
&\qquad = (a^f\alpha^{f} + a^g\alpha^{g}- \delta K)V + (\alpha^x -a^e\alpha^f + \alpha^e) Y + \langle Q_1 \mathcal{X} ,\mathcal{X}\rangle+ 2\langle Q_2\overline{\xi}, \mathcal{X}\rangle + \langle Q_3 v, v\rangle  \\
&\qquad\qquad+2\langle q_1, \mathcal{X}\rangle + 2 \langle q_2(\w),v\rangle + \sigma^KKZ^{1,1} - \sigma^E [ \rho Z^{2,2} + \sqrt{1-\rho^2} Z^{0,2}].
\end{align*}
As a consequence, the first order condition becomes
\begin{equation}
\label{eqn: optimal control}
2Q_3 \widehat{v} + 2q_2(\w)+ A_2^\top \mathcal{Y}= 0,\Longrightarrow \widehat{v}:= -Q_3^{-1} \bigg(q_2(\w)+\frac{1}{2}A_2^\top \mathcal{Y}\bigg), 
\end{equation}
which is explicitly given by:
\begin{equation}
\label{eqn: FOC} 
\begin{cases}
\widehat{\alpha}^f &:= -\frac{1}{2c_{2,f}}(a^fV -a^eY + c_{1,f}),\\
\widehat{\alpha}^g &:=-\frac{1}{2c_{2,g}}(a^gV + c_{1,g}),\\
\widehat{\alpha}^e&:= -\frac{1}{2c_{2,e}}(c_{1,e} + Y),\\
\widehat{\alpha}^x&:= -\frac{1}{2c_{2,x}}(\w + Y).
\end{cases}
\end{equation}

In particular, the optimal control is $\widehat{v}_t=-Q_3^{-1}(q_2(\w_t)+\frac{1}{2}A_2^\top \mathcal{Y}_t)$, where $\mathcal{Y}_t$ solves the backward stochastic differential equation:

\begin{equation} 
\label{eqn: adjoint process N}
\begin{cases}
d\mathcal{Y}^i_t = -\big( A_1^\top \mathcal{Y}^i_t + (A^K_3)^\top Z^{i,K}_t + 2Q_1\mathcal{X}^i_t + 2Q_2 \mathfrak{m}^{(N)}(\underline{\mathcal{X}}_t) + 2q_1 \big)dt + Z^i_t dW^i_t + Z^{0,i}_t dW^0_t ,\\
\mathcal{Y}^i_T = Q_4\mathcal{X}^i_T.
\end{cases}
\end{equation} 

\subsection{The regulator problem}
\label{subsect_model_N:regulator}
As discussed in the introduction, the model setup considered in this paper is analogous to the one analysed in \cite[Section 3]{del2024mean}. In contrast to the cited reference, our focus is on the optimal regulatory strategy aimed at balancing carbon-emissions reduction and economic sustainability. 

The ETS mechanism is based on an auction system in which the regulator issues emission permits. The market price $\varpi$ is then determined by the interaction between firms’ demand for permits and the number of permits issued by the regulator. Consequently, by deciding over time how many \emph{new} permits to release into the market, the regulator can influence the price process and, in turn, the firms’ optimal decisions in the maximisation of their objective functionals.

\subsubsection{The auction system}
We describe how the auction system works. We consider the equilibrium price, determined by the balance between the demand (number of permits requested by the companies) and the supply (number of permits sold by the companies as well as issued by the regulator in the market). This condition, called \emph{market clearing} is defined by
\begin{equation}
\label{eqn: market clearing N market}
\sum_{j = 1}^{N} \alpha^{x,j}_t + \beta^{(N)}_t = 0,\quad dt\otimes \mathbb{P},
\end{equation} 
where the control $\beta^{(N)}:= (\beta^{(N)}_t)_{t\in[0,T]}$  represents the (total) number of permits issued at time $t$ by the regulator.

\begin{remark}
\label{rmk:beta-sign}
The number of permits issued by the regulator should be of the same order of magnitude of the number of companies in the market. This is convenient to guarantee that the effect of the regulator's policies does not become negligible when the number of companies is large (for details see \cite[Remark 3.11]{fujii2022equilibrium}). As a consequence, we assume that the number of permits, in the market populated by $N$ companies, is of the form $\beta^{(N)}_t \equiv N {\beta}_t$, where $\beta_t$ denotes the \emph{normalised number of permits} issued by the regulator. 
In addition, from an economic point of view it is natural to suppose that the regulator cannot withdraw permits from the market, but it can only issue them. As a consequence, $\beta_t\leq0$ for every $t\in[0,T]$, i.e. $A= [-a,0]$ (eventually we can assume that $A = (-\infty, 0]$). 
\end{remark}

The companies are assumed to be price takers. Therefore, when solving their individual optimal control problem, they treat the price process $\w$ as exogenous. Hence, the stochastic maximum principle derived above applies and the optimal control $\hat{\alpha}^{x,j}_t$ of every company, $j \in \{1, \dots, N\}$, can be described by the last component of the first order condition on the Hamiltonian: $\widehat{\alpha}^{x,j}_t= -\frac{1}{2c_{2,x}}(\w_t + Y^j_t)$. Substituting this control in \eqref{eqn: market clearing N market}, the equilibrium price solves
\begin{equation*} 
    \sum_{j = 1}^{N} -\frac{1}{2c_{2,x}}(\w_t + Y^j_t) + \beta^{(N)}_t = 0.
\end{equation*}
As discussed in \cite{follmer1974random}, when the number of companies is sufficiently large, it is convenient to consider the normalised version of \eqref{eqn: market clearing N market}, obtained by dividing \eqref{eqn: market clearing N market} by $N$. We then have 
\begin{equation} 
\label{eqn: eq price process N}
    \w^N_t 
    \equiv -\frac{1}{N}\sum_{j = 1}^{N}Y^j_t + 2c_{2,x}\frac{1}{N}\beta^{(N)}_t 
    = -\mathfrak{m}^{(N)}(\underline{Y}_t) + 2c_{2,x}\beta_t. 
\end{equation}
This definition is in accordance with Remark \ref{rmk: controls reg N} below. Indeed, the price $\w^N$ depends on $\beta^{(N)}$ but also on 
$\mathfrak{m}^{(N)}(\underline{Y}_t)$. 
This suggests that assuming that $\beta^{(N)}$ only depends on the common shock $W^0$ would be overly restrictive. 

We adopt the notation 
\begin{equation*}
\mathfrak{m}^{(N)}(\underline{\mathcal{Y}}_t) := \mathfrak{m}^{(N)}((\underline{V}_t,\underline{Y}_t)^\top) =\frac{1}{N} \sum_{j = 1}^N (V^j_t,Y^j_t)^\top.
\end{equation*}
If the price satisfies \eqref{eqn: eq price process N}, we have an interdependent structure given by the empirical mean of the optimal controls of the companies as well as the control of the regulator. Considering the solution $\mathcal{X}$ of \eqref{eqn: state variable N}, where the control $v$ is the candidate optimal control introduced in \eqref{eqn: FOC}, we observe that the drift of $\mathcal{X}$ depends on $\w^N$. Hence, substituting $\w^N$ in the form \eqref{eqn: eq price process N} in \eqref{eqn: FOC}, we conclude that $\mathcal{X}$ depends on $\beta^{(N)}$.


\subsubsection{The regulator's target}

In this setting, we introduce an optimal control problem for the regulator, who plays the role of a central planner. We recall that $K^j_t$ represents the capital level produced at time $t$ by company $j = 1,\dots,N$, while $E^j_t$ represents the cumulated level of emission of company $j$. Hence, the regulator is not solely interested in minimising total emissions, but rather in optimising the trade-off between environmental sustainability (i.e., reducing emissions) and industrial sustainability (i.e., supporting the economic performance of firms). 
From an economic point of view, a realistic objective regarding industrial sustainability is the maximisation of average profit of the firms, which depends on their production, energy composition, and emission abatement strategies. This approach follows the same reasoning as in \cite[equation (8)]{aid2023optimal}, where the policy maker seeks to optimise social welfare derived from firm-level performance. Accordingly, in the finite-dimensional market we define the regulator’s objective functional as
\begin{align}
\label{eqn: target-reg-N}
    J^0_{(N)}(\beta^{(N)})
    & :=\E\Bigg[\frac{1}{N}\sum_{i = 1}^N \Bigg\{ \int_0^T a(t)\bigg[
    b(t)\beta_t^2-p(K^i_t, \mathfrak{m}^{(N)}(\underline{K}_t)) A K^i_t 
    + \hat{\alpha}^{x,i}_t \, \w^N_t + c_{2,x}(\hat{\alpha}^{x,i}_t)^2 
     \\
    \nonumber&\qquad
    + \sum_{p = f,g,e} C^p(\hat{\alpha}^{p,i}_t)
    \bigg] dt+ a(T)\lambda (X^i_T)^2 + \Bigg( \avg E^j_T - \theta\Bigg)^2 \Bigg\} \Bigg],
\end{align}

where $\w^N$ is defined in \eqref{eqn: eq price process N} and $\theta$ denotes the regulator’s target level for average economy-wide emissions after the implementation of carbon-reduction policies.

In its most straightforward formulation, the regulator’s objective combines two effects: the maximisation of firm profits and the minimisation of emissions. However, these two effects enter the functional with different growth structures. Indeed, firms’ profits depend quadratically on the production level through the revenue term, whereas cumulative emissions enter linearly. As a consequence, the incentive to sustain production tends to dominate the incentive to reduce emissions. This imbalance reflects the absence of an explicit representation of the \emph{cap} side of the cap-and-trade mechanism, i.e., the gradual limitation of the total number of permits over time. Consequently, the regulator faces a strong bias towards issuing more permits to support production, with only a relatively weaker incentive to curb emissions.
To prevent this unrealistic behaviour, we introduced a positive quadratic term $b(t)\beta_t^2$, with  $b(t) > c_{2,x}$ for all $t\in[0,T]$, which is included in the objective functional to penalize excessive new permits issuance. This term represents an implicit cost borne by the regulator when a high number of new permits is issued. Choosing $b(t)$ as an increasing function over time reinforces the interpretation that environmental policies become progressively more stringent as environmental concerns grow. In this formulation, the regulator evaluates the aggregate economic performance of the firms through their expected profits, while internalising the environmental cost generated by their emissions. The resulting policy $\beta$ determines the number of new emission permits to be issued effectively steering the trade-off between the stimulation of industrial growth and the limitation of environmental degradation.

In addition, in order to quantify the effect of the ETS-based policy of the regulator, the value $\theta$ should be defined starting from the expected amount of CO2 emitted by the economy in absence of policy, e.g. as $50\%$ of the CO2 emitted under the same economic conditions, but with no policy applied by the regulator.

\begin{remark}
\label{rmk: controls reg N}
In a market populated by finitely many companies, the strategy of the regulator should depend on any source of randomness: on the common shocks as well as the idiosyncratic shocks of every agent. However, when the number of companies is large, it is unrealistic to expect that the regulator takes her strategies in response to the noises of every company. It is actually more realistic to suppose that she can observe the common shocks affecting the economy as well as the equilibrium price process, which implicitly depends on every idiosyncratic term. Hence, the proper class of controls for the regulator should be $\mathbb{A}^{0,N} : =\mathbb{H}^2( \F^{W^0,\w^N};\R)$, where $\F^{W^0,\w^N}$ is the filtration generated by $W^0$ and $\w^N$, which is possibly correlated with the idiosyncratic noises $\underline{W}^{N;E}, \underline{W}^{N;K}$, through $\w^N$. As discussed in \cite[Remark 3.1]{fujii2022equilibrium}, this is a key issue, which leads the solution of the optimal control problem for the regulator in the finite dimensional case very difficult to solve. As we are going to discuss in the next section, a natural strategy to circumvent this problem is to pass to the mean-field formulation of such model. 
\end{remark}

Finally, we introduce the following objects to emphasise the dependence of the FBSDE on the control of the regulator:
\begin{equation*}
q_3 := 
\begin{pmatrix}
\frac{1}{2}c_{1,f}\\
\frac{1}{2}c_{1,g}\\
\frac{1}{2}c_{1,e}\\
0
\end{pmatrix},\quad 
Q_5=
\begin{pmatrix}
0	\\
0	\\
0	\\
c_{2,x}
\end{pmatrix},
\quad Q_6 :=
\begin{pmatrix}
0	&0\\
0	&0\\
0	&0\\
0	&-\frac{1}{2}\\
\end{pmatrix},
\end{equation*}
As a consequence, FBSDE \eqref{eqn: state variable N}, \eqref{eqn: adjoint process N} becomes
\begin{equation}
\label{eqn: eq FBSDE company i}
\begin{cases}
d\mathcal{X}^i_t = \big[ A_1 \mathcal{X}^i_t + A_2( -Q_3^{-1} ( q_3 + Q_5{\beta}_t + Q_6\mathfrak{m}^{(N)}(\underline{\mathcal{Y}}_t)+ \frac{1}{2}A_2^\top \mathcal{Y}^i_t))\big] dt + \sigma(\mathcal{X}^i_t) dW^i_t + A_5dW^0_t,\\
d\mathcal{Y}^i_t = -\big( A_1^\top \mathcal{Y}^i_t + (A^K_3)^\top Z^{i,K}_t + 2Q_1\mathcal{X}^i_t + 2Q_2 \mathfrak{m}^{(N)}(\underline{\mathcal{X}}_t) + 2q_1 \big)dt + Z^i_t dW^i_t + Z^{0,i}_t dW^0_t ,\\
\mathcal{X}^i_0 = (K_0,X_0)^\top,\\
\mathcal{Y}^i_T = Q_4\mathcal{X}^i_T.
\end{cases}
\end{equation}

In this formulation, the regulator’s problem is convex in the regulator's control, ensuring the existence of an optimal solution while preserving a consistent economic interpretation. 
From a mathematical standpoint, this indeed guarantees the direct application of the stochastic maximum principle as presented, for example, in \cite[Appendix A]{fujii2022equilibrium}, since the problem does satisfy the standard convexity assumptions required for optimality.

\subsection{Rewriting the regulator's objective}\label{subsect_model_N:regulator_rewritten}

Let us now analyse every term of \eqref{eqn: target-reg-N}, to rewrite more explicitly the target function of the regulator. 
In order to simplify the notation we introduce the quantity $\beta_t:=\frac{1}{N}\beta^{(N)}_t$, for all $t \in [0,T]$.
\begin{align*}
    \frac{1}{N}\sum_{j = 1}^N p(K^j_t, \mathfrak{m}^{(N)}(\underline{K}_t)) AK^j_t & = \frac{1}{N}\sum_{j = 1}^N\bigg(a- b(1-\gamma) AK^j_t - b\gamma A\mathfrak{m}^{(N)}(\underline{K}_t)\bigg) AK^j_t\\
    & = aA\mathfrak{m}^{(N)}(\underline{K}_t) - b(1-\gamma) A^2\frac{1}{N} \sum_{j = 1}^N (K^j_t)^2 - b\gamma  A^2(\mathfrak{m}^{(N)}(\underline{K}_t))^2,\\
    \frac{1}{N}\sum_{j = 1}^NC^f(\hat\alpha^{f,j}_t) 
    & =\frac{1}{4 c_{2, f}}\bigg(-c_{1,f}^2 + \avg(a^f{V}^j_t-a^e{Y}^j_t)^2\bigg),\\
    \frac{1}{N}\sum_{j = 1}^NC^e(\hat\alpha^{e,j}_t)
    &= \frac{1}{4 c_{2, e}}(-c_{1,e}^2 + \frac{1}{N}\sum_{j=1}^N(Y^j_t)^2),\\
    \avg C^g(\hat\alpha^{g,j}_t) 
    &= \frac{1}{4 c_{2, g}}\bigg(-c_{1,g}^2 + (a^g)^2\frac{1}{N}\sum_{j = 1}^N (V^j_t)^2\bigg),\\
    \avg \big( \hat \alpha^{x,j}_t \w^N_t + c_{2,x} (\hat\alpha^{x,j}_t)^2)&= \avg \Bigg( - \frac{1}{2c_{2,x}} \w^N_t(\w^N_t + Y^j_t) + \frac{1}{4c_{2,x}} (\w^N_t + Y^j_t)^2\Bigg)\\
    &= \frac{1}{4c_{2,x}}\Bigg(\avg(Y^j_t)^2  - (\w^N_t)^2\Bigg) \\
    & = \frac{1}{4c_{2,x}}\Bigg[ \avg(Y^j_t)^2 - (\mathfrak{m}^{(N)}(\underline{Y}_t))^2 + 4c_{2,x} \mathfrak{m}^{(N)}(\underline{Y}_t)\beta_t - 4(c_{2,x})^2 \beta^2_t\Bigg],\\
    \bigg(\avg E^j_T - \theta\bigg)^2 
    & = \bigg(\int_0^T \avg \hat{\alpha}^{j,i}_t dt - \avg X^j_T + A^0-\theta\bigg)^2\\
    & = \bigg (\int_0^T \beta_tdt\bigg)^2 + ( \mathfrak{m}^{(N)}(\underline{X}_T))^2 + (A^0 - \theta)^2 \\
    &\qquad + 2\bigg(\int_0^T \beta_tdt\bigg)\mathfrak{m}^{(N)}(\underline{X}_T) - 2(A^0 - \theta) \mathfrak{m}^{(N)}(\underline{X}_T)\\
    &\qquad- 2(A^0-\theta) \bigg(\int_0^T\beta_t dt\bigg).
\end{align*}

In conclusion, adopting the notation $(A^j)_{j= 1}^N$, $\mathfrak{m}^{(2,N)}(\underline{A}_t) := \frac{1}{N} \sum_{j =1}^N (A^j_t)^2$ for $t\in[0,T]$, the target function for the central planner is 
\begin{align*}
    &J^0_{(N)}(\beta^{(N)}) \\
    &= \E\Bigg[\frac{1}{N}\sum_{i = 1}^N \bigg[ \int_0^Ta(t)\bigg[b(t)\left(\frac{1}{N}\beta^{(N)}_t\right)^2 -p(K^i_t, \mathfrak{m}^{(N)}(\underline{K}_t)) AK^i_t + \hat\alpha^{x,i}_t \w^N_t + c_{2,x}(\hat\alpha^{x,i}_t)^2 \\
    &\qquad + \sum_{p = f,g,e}C^p(\hat\alpha^{p,i}_t)\bigg] dt + a(T)\lambda (X^i_T)^2 + \bigg( \avg E^j_t - \theta\bigg)^2\bigg] \Bigg]\\
    &= \E\bigg[ \int_0^Ta(t) \bigg\{-aA\mathfrak{m}^{(N)} (\underline{K}_t) + b(1-\gamma) A^2 \mathfrak{m}^{(2,N)}(\underline{K}_t) + b\gamma A^2 (\mathfrak{m}^{(N)}(\underline{K}_t))^2 \\
    &\qquad + \mathfrak{m}^{(2,N)}(\underline{Y}_t) \bigg[\frac{(a^e)^2}{4c_{2,f}} + \frac{1}{4c_{2,e}} + \frac{1}{4c_{2,x}}\bigg] +  \mathfrak{m}^{(2,N)}(\underline{V}_t) \bigg[\frac{(a^f)^2}{4c_{2,f}} + \frac{(a^g)^2}{4c_{2,g}}\bigg]\\
    &\qquad - \frac{a^fa^e}{2 c_{2,f}} \avg (V^j_tY^j_t) - \frac{1}{4c_{2,x}}(\mathfrak{m}^{(N)}(\underline{Y}_t))^2 + \mathfrak{m}^{(N)}(\underline{Y}_t)\beta_t +(b(t)- c_{2,x})\beta^2_t \bigg\}dt \\
    &\qquad+ a(T) \lambda \mathfrak{m}^{(2,N)}(\underline{X}_T) +\bigg(\int_0^T\beta_tdt\bigg)^2 + ( \mathfrak{m}^{(N)}(\underline{X}_T))^2 + (A^0 - \theta)^2 \\
    &\qquad+ 2\bigg(\int_0^T \beta_tdt\bigg)\mathfrak{m}^{(N)}(\underline{X}_T) - 2(A^0 - \theta) \mathfrak{m}^{(N)}(\underline{X}_T) - 2(A^0-\theta) \bigg(\int_0^T\beta_t dt\bigg)\bigg].
\end{align*}
We introduce the cumulated-issued credit process: 
\begin{equation}
    \label{eqn: process Bt}
    B_t := \int_0^t \beta_sds.
\end{equation}
This additional controlled process should be included in the definition of the state variable of the regulator.

\section{A mean-field limit approximation of the market}
\label{section: mean field approach}
In this setting it seems natural to pass to the mean-field limit of the game, considering the limit in the number of companies populating the market. Indeed, the equations, that we derived for all the players of the market model, depend on the state and control of the considered player, together with the empirical mean of the solutions of \eqref{eqn: eq FBSDE company i}. 

We consider a filtered probability space $(\Omega, \ItF,\prob,\F)$ where $\F:= (\ItF_t)_{t\in[0,T]}$ is the usual augmentation of the filtration generated by a $3$-dimensional Brownian motion $(W^0,W)$, where $W := (W^E,W^K)$.

\subsection{The mean-field limit of the Nash equilibrium for the companies}
\label{subsection: mf companies game}
As in \cite{lacker2019mean}, we introduce the following random variable 
\begin{equation*}
\zeta:= (a^f,a^g,a^e,\delta,\sigma^K,\sigma^E,\rho,c_{1,f},c_{2,f},c_{1,g},c_{2,g},c_{1,e},c_{2,e},c_{1,f},c_{2,f},a,b,\gamma,A,K_0, X_0,\w,\lambda),
\end{equation*} 
that is called type vector and takes values in 
\begin{equation*}
\mathcal{Z}:= \R\times (0,\infty)^6 \times(0,1) \times (0,\infty)^{12} \times \R^2 \times (0,\infty)^2.
\end{equation*}
We call type distribution $m = \mathcal{L}(\zeta)$. 
Formally, as discussed in \cite[Section 3]{lacker2019mean}, the mean-field game is defined using the vector $\zeta$ to define a \emph{representative agent's} state variable, that solves: 
\begin{equation}
\label{eqn: mf state variable}
\begin{cases}
d\mathcal{X}_t = ( A_1 \mathcal{X}_t + A_2v_t)dt + \sigma(\mathcal{X}_t)dW_t + A_5dW^0_t,\\
\mathcal{X}_0=( K_0,X_0).
\end{cases}
\end{equation}
The target function to be minimised is
\begin{equation}
\label{eqn: mf target function}
J(v; \mathcal{X}, \bar{\mathcal{X}}, \w) := \E\bigg[\int_0^T \bigg(\langle Q_1 \mathcal{X}_t ,\mathcal{X}_t\rangle+ 2\langle Q_2 \bar{\mathcal{X}}_t, \mathcal{X}_t\rangle + \langle Q_3 v_t, v_t\rangle + 2\langle q_1, \mathcal{X}_t\rangle + 2 \langle q_2,v_t\rangle \bigg)dt + \langle Q_4\mathcal{X}_T, \mathcal{X}_T\rangle\bigg],
\end{equation}
where $\bar{\mathcal{X}}_t := (\E[K_t \mid \mathcal{F}^0_t],\E[X_t\mid \mathcal{F}^0_t])^\top$, where $(\mathcal{F}^0_t)_{t\in[0,T]}$ is the usual augmentation of the natural filtration of $W^0$. As a consequence, in analogy to \cite[Section 4]{del2024mean}, the optimal control, minimising the objective function $J(v; \mathcal{X}, \bar{K}, \w)$, is given by \eqref{eqn: optimal control}, where $\mathcal{Y}$ solves the following BSDE: 
\begin{equation} 
\label{eqn: mf adjoint process}
\begin{cases}
d\mathcal{Y}_t = -\big( A_1^\top \mathcal{Y}_t + (A^K_3)^\top Z^{K}_t + 2Q_1\mathcal{X}_t + 2Q_2 \bar{\mathcal{X}}_t + 2q_1 \big)dt + Z_t dW_t + Z^{0}_t dW^0_t ,\\
\mathcal{Y}_T = Q_4\mathcal{X}_T.
\end{cases}
\end{equation} 
In analogy to \cite[Proposition 4.2]{del2024mean}, strong existence and uniqueness of solutions of the FBSDE system
\begin{equation}
\label{eqn: mf FBSDE company}
\begin{cases}
d\mathcal{X}_t = \Big[A_1\mathcal{X}_t + A_2(-Q_3^{-1} (q_2(\w_t)+\frac{1}{2}A_2^\top \mathcal{Y}_t))\big]dt + \sigma(\mathcal{X}_t)dW_t + A_5dW^0_t,\\
\mathcal{X}_0=( K_0,X_0),\\
d\mathcal{Y}_t = -\Big[ A_1^\top \mathcal{Y}_t + (A^K_3)^\top Z^{K}_t + 2Q_1\mathcal{X}_t + 2Q_2 \bar{\mathcal{X}}_t + 2q_1 \Big]dt + Z_t dWt + Z^{0}_t dW^0_t ,\\
\mathcal{Y}_T = Q_4\mathcal{X}_T,
\end{cases}\quad t\in[0,T]
\end{equation} 
is guaranteed for every choice of the process $\w$. 

\subsection{The mean-field approximation of the optimal control problem for the regulator}\label{subsection_mfg_regulator}
We describe the mean-field approximation of optimal control problem that has to be solved by the regulator. In the mean-field setting, the regulator has to deal with a continuum of companies, as we discussed in Section \ref{subsection: mf companies game}. Defining $\overline{E}_t := \mathbb{E}[E_t|\ItF^0_t]$, the target function to be minimised by the regulator in this case is
\begin{equation*}
\mathcal{J}^0(\beta) := \mathbb{E} \bigg[ \int_0^TF( \bar{\mathcal{X}}_t,  \bar{\mathcal{Y}}_t; \beta_t) dt + G(  \bar{\mathcal{X}}_T ,\bar{\mathcal{Y}}_T; \beta_T)\bigg],
\end{equation*} 
where $\beta\in \mathbb{H}^2(\F^{W^0,\w};\R)$, $\w$ is a stochastic process defined on $(\Omega,\ItF, \prob,\F)$. With respect to the cost function defined in Section \ref{subsect_model_N:regulator}, we substitute the empirical mean of the emission of every company with the conditional expectation with respect the filtration generated by the common noise of emission of the representative company. We aim at re-writing the target function in terms of the state variable of the representative company $\mathcal{X}$ and the control of the regulator, with no explicit dependence on the emission process. 
\\
To this end, we notice that the empirical mean of emission of the companies is defined by 
\begin{equation*}
\mathfrak{m}^{(N)}(\underline{E}_t) :=\frac{1}{N} \sum_{j = 1}^N \int_0^t\alpha^{x,j}_sds - \mathfrak{m}^{(N)}(\underline{X}_t) + \mathfrak{m}^{(N)}(\underline{A}^i_0),\quad t\in[0,T].
\end{equation*}
Furthermore, as discussed in Remark \ref{rmk: controls reg N}, market clearing condition \eqref{eqn: eq price process N} is incoherent with the usual assumption made in the literature, i.e. $\w$ is adapted to $\F^0$. However, in the mean-field limit it is natural to suppose that the idiosyncratic noises cancel out, obtaining
\begin{equation}
\label{eqn: mf market clearing} 
-\beta_s := \lim_{N\to\infty} \frac{1}{N}\sum_{j = 1}^N\hat\alpha^{x,j}_s \approx \E\big[\hat{\alpha}^{x}_s |\ItF^0_s\big],\quad s\in[0,T],\quad \prob-a.s.,
\end{equation}
where $\hat\alpha^x$ is the optimal permit-trading rate of a representative company. We conclude that the number of permits emitted by the regulator should be (in the mean-field limit) independent of the idiosyncratic noises of the representative company ($W$) and adapted to $\F^0$. 

Adopting the approach developed in \cite{fujii2022mean}, we consider the mean-field limit of normalised version of the market clearing condition defined for the finite dimensional market in \eqref{eqn: eq price process N}:
\begin{equation}
\label{eqn: mf eq price}
\wmf_t :=  - \overline{Y}_t + 2c_{2,x} \beta_t,
\end{equation}
where, for every $t \in [0,T]$, $\overline{Y}_t := \E[Y_t\mid \ItF^0_t]$ and $\beta_t$ is strategy of the regulator. As a consequence, analogously to \cite[Section 6]{del2024mean}, we introduce the notation $\overline{\mathcal{Y}}_t := (\E[V_t|\ItF^0_t], \E[Y_t|\ItF^0_t])^\top$ for $t\in[0,T]$. In particular, imposing that the price process follows \eqref{eqn: mf eq price}, together with Assumption \ref{ass: price taker companies}, we obtain that the solution to the mean-field game for the representative company is described by the following FBSDE system (the adjoint process is defined as the mean-field version of \eqref{eqn: adjoint process N}):
\begin{equation}
\label{eqn: eq mf FBSDE company} 
\begin{cases}
d\mathcal{X}_t = \big[ A_1 \mathcal{X}_t + A_2( -Q_3^{-1} ( q_3  + Q_5\beta_t+ Q_6\overline{\mathcal{Y}}_t+ \frac{1}{2}A_2^\top \mathcal{Y}_t))\big] dt + \sigma(\mathcal{X}_t)dW_t + A_5dW^0_t,\\
d\mathcal{Y}_t = -\big( A_1^\top \mathcal{Y}_t + (A^K_3)^\top Z^{K}_t + 2Q_1\mathcal{X}_t + 2Q_2 \bar{\mathcal{X}}_t) + 2q_1 \big)dt + Z_t dWt + Z^{0}_t dW^0_t ,\\
\mathcal{X}_0 = (K_0,X_0)^\top,\\
\mathcal{Y}_T = Q_4\mathcal{X}_T.
\end{cases}
\end{equation}
We should prove the existence and uniqueness of the solution of \eqref{eqn: eq mf FBSDE company} for every given process $\beta\in\mathbb{H}^2(\F^0;\R)$. For the existence and uniqueness of solution of FBSDEs of the Mckean-Vlasov type (i.e. whose coefficients depend on the distribution of the solution itself), we base on \cite{carmona2013mean,carmona2015forward}.

\subsection{The passage to the limit}\label{subsect_MFG_passage_to_limit}
By Yamada-Watanabe representation theorem \cite[Theorem 1.33]{carmona2018probabilistic2}, there exists a deterministic progressively measurable function $\Psi$ such that
\begin{align*}
\Psi:\mathcal{D}([0,T],\R) \times \mathcal{C}([0,T],\R^2)&\to \mathcal{C}([0,T], \R^5)\times  \mathcal{D}([0,T],\R^2) \\
(\beta, W^0, W) & \mapsto\Psi( \beta, W^0,W):= (\mathcal{X}, \mathcal{Y}, Z^0,Z),
\end{align*} 
where $(\mathcal{X},\mathcal{Y},Z^0,Z)$ solves \eqref{eqn: eq mf FBSDE company}. Reasoning analogously, there exists a progressively measurable function $\Psi^N$, such that denoted by $\mathcal{K}^{i,N}:= (\mathcal{X}^i, \mathcal{Y}^i, Z^{0,i}, Z^i)$ the solution of \eqref{eqn: eq FBSDE company i}, for $i = 1,\dots,N$, the following relation holds
\begin{align*} 
\Psi^N: \mathcal{D}([0,T],\R) \times \mathcal{C}([0,T],\R^{N+1})&\to (\mathcal{C}([0,T],\R^4))^N\\ 
 ( \beta, W^0,W^1,\dots,W^N)& \mapsto \Psi^N(\mathcal{K}^{1,N},\dots, \mathcal{K}^{N,N}).
\end{align*}
We conclude that the family $(\mathcal{K}^{1,N},\dots, \mathcal{K}^{N,N})$ is exchangeable. As a consequence, the families 
\begin{equation*}
((K^1)^2,\dots, (K^N)^2),\quad ((Y^1)^2,\dots, (Y^N)^2),\quad ((V^1)^2,\dots, (V^N)^2),\quad(V^1Y^1,\dots, V^N,Y^N)
\end{equation*}
are exchangeable too. In analogy to Section \ref{subsection: mf companies game}, we conclude that
\begin{align*} 
\lim_{N\to \infty} \mathfrak{m}^{(2,N)}(\underline{L}_t) &= \E[L_t^2 \mid \ItF^0_t],\quad t\in[0,T],\quad \prob-a.s,\quad \forall L\in \{K,X,Y,V\},\\
\lim_{N\to \infty} \avg (V^j_tY^j_t)  &= \E[V_tY_t \mid \ItF^0_t],\quad t\in[0,T],\quad \prob-a.s,
\end{align*}
where $(\mathcal{X}, \mathcal{Y}) := ((K,X), (V,Y))$ solves \eqref{eqn: eq mf FBSDE company}. In the mean-field limit the optimal control problem to be solved by the regulator becomes
\begin{align}
    \label{eqn: mf target function regulator}
    &\mathcal{J}^0(\beta) \\
    &\nonumber\qquad :=\E \Bigg[ \int_0^T\Bigg[a(t) \Bigg(  -aA\E[K_t \mid \ItF^0_t] + b(1-\gamma)  A^2\E[ K^2_t \mid \ItF^0_t] + b \gamma A^2 \Big( \E[K_t\mid \ItF^0_t] \Big)^2  \\
    &\nonumber\qquad \qquad+ \gamma_1 \E[Y^2_t\mid\ItF^0_t] + \gamma_2 \Big(\E[Y_t \mid \ItF^0_t]\Big)^2 + \gamma_3 \E[ V^2_t\mid \ItF^0_t]  + \gamma_4 \E[V_t Y_t \mid \ItF^0_t] + \gamma_5\Bigg) \\
    &\nonumber\qquad\qquad  + \beta_t \big( a(t)\E[Y_t \mid \ItF^0_t] - 2(A^0-\theta)\big) +a(t)(b(t)-c_{2,x}) \beta^2_t \Bigg] dt + a(T) \lambda \E[X^2_T \mid \ItF^0_t] + B^2_T  \\
    &\nonumber\qquad\qquad + (\E[X_T\mid \ItF^0_T])^2+ (A^0 -\theta)^2 + 2 B_T \E[X_T \mid \ItF^0_T] - 2 (A^0 - \theta) \E[X_T \mid \ItF^0_T]\Bigg]\\
    &\nonumber\qquad =\E \Bigg[ \int_0^T\Bigg[a(t) \Bigg(  -aAK_t + b(1-\gamma) A^2 K^2_t + b  \gamma A^2\Big( \E[K_t\mid \ItF^0_t] \Big)^2 + \gamma_1Y^2_t+ \gamma_2 \Big(\E[Y_t \mid \ItF^0_t]\Big)^2 \\
    &\nonumber\qquad \qquad +\gamma_3 V^2_t + \gamma_4 V_t Y_t  + \gamma_5\Bigg) + \beta_t \big( a(t)\E[Y_t\mid \ItF^0_t]  - 2(A^0 - \theta)\big) +a(t)(b(t)-c_{2,x}) \beta^2_t \Bigg] dt \\
    &\nonumber \qquad\qquad+ a(T) \lambda X^2_T+B^2_T + (\E[X_T\mid \ItF^0_T])^2 + 2B_TX_T - 2(A^0-\theta)X_T\Bigg] + (A^0-\theta)^2\\
    &\nonumber\qquad = \E\Bigg[ \int_0^T (\langle Q^{0;X}_s \mathcal{X}_s,\mathcal{X}_s\rangle+\langle Q^{0;Y}_s \mathcal{Y}_s,\mathcal{Y}_s\rangle + \langle \bar{Q}^{0;X}_s \bar{\mathcal{X}}_s, \bar{\mathcal{X}}_s\rangle+ \langle \bar{Q}^{0;Y}_s \bar{\mathcal{Y}}_s, \bar{\mathcal{Y}}_s\rangle +a(s)(b(s)- c_{2,x})\beta_s^2\\
    &\nonumber\qquad \qquad  -2(A^0-\theta)\beta_s+ 2 S^{0;Y}_s \bar{\mathcal{Y}}_s \beta_s  + 2 \langle q^{0;X}_s , \mathcal{X}_s\rangle + 2 \langle q^{0;Y}_s ,\mathcal{Y}_s\rangle + \gamma_5) ds + \langle H^0 \mathcal{X}_T , \mathcal{X}_T\rangle\\
    &\nonumber\qquad\qquad + \langle H^1\bar{\mathcal{X}}_T, \bar{\mathcal{X}}_T\rangle + 2B_T \langle h^1, \bar{\mathcal{X}}_T \rangle+ B^2_T + \langle h^0, \mathcal{X}_T\rangle\Bigg]+ (A^0 - \theta)^2,
\end{align} 
where 
\begin{alignat*}{4}
    \gamma_1 &= \frac{1}{4c_{2,x}} + \frac{(a^e)^2}{4c_{2,f}} + \frac{1}{4c_{2,e}}, \qquad&
    Q^{0;X}_s &= a(s) \begin{pmatrix} b(1-\gamma)A^2 & 0 \\ 0 & 0 \end{pmatrix}, \\[6pt]
    \gamma_2 &= -\frac{1}{4c_{2,x}}, \qquad&
    \bar{Q}^{0;X}_s &= a(s) \begin{pmatrix} b\gamma A^2 & 0 \\ 0 & 0 \end{pmatrix}, \\[6pt]
    \gamma_3 &= \frac{(a^f)^2}{4c_{2,f}} + \frac{(a^g)^2}{4c_{2,g}}, \qquad&
    Q^{0;Y}_s &= a(s) \begin{pmatrix} \gamma_3 & \frac{1}{2}\gamma_4 \\ \frac{1}{2}\gamma_4 & \gamma_1 \end{pmatrix}, \\[6pt]
    \gamma_4 &= -\frac{a^f a^e}{2c_{2,f}}, \qquad&
    \bar{Q}^{0;Y}_s &= a(s) \begin{pmatrix} 0 & 0 \\ 0 & \gamma_2 \end{pmatrix}, \\[6pt]
    \gamma_5 &= -\left[ \frac{(c_{1,f})^2}{4c_{2,f}} + \frac{(c_{1,g})^2}{4c_{2,g}} + \frac{(c_{1,e})^2}{4c_{2,e}} \right], \qquad&
    S^{0;Y}_s &= \begin{pmatrix} 0 & \frac{1}{2}a(s) \end{pmatrix}, \\[6pt]
    & &
    q^{0;X}_s &= \begin{pmatrix} -\frac{1}{2}a(s)aA \\ 0 \end{pmatrix}, \\[6pt]
    & &
    H^0 &= \begin{pmatrix} 0 & 0 \\ 0 & a(T)\lambda \end{pmatrix}, \\[6pt]
    & &
    H^1 &= \begin{pmatrix} 0 & 0 \\ 0 & 1 \end{pmatrix}, \\[6pt]
    & &
    h^0 &= \begin{pmatrix} 0 \\ -2(A^0 - \theta) \end{pmatrix}, \\[6pt]
    & &
    h^1 &= \begin{pmatrix} 0 \\ 1 \end{pmatrix}.
\end{alignat*}

The second identity comes from the tower property, Fubini's theorem together with the fact that $\beta\in \mathbb{H}^2(\F^0;\R)$. 

The regulator’s optimisation problem in the mean-field limit, given by \eqref{eqn: mf target function regulator}, is a linear–quadratic stochastic control problem of McKean–Vlasov type. In the following we omit the term $(A^0 - \theta)^2$, appearing in the cost functional, since it doesn't depend either on the control $\beta$ or the state variable $(\mathcal{X}, B, \mathcal{Y})$. The state variables satisfy a forward–backward system of stochastic differential equations driven by idiosyncratic and common sources of uncertainty, with the solution indicated by $(\mathcal{X}, B, \mathcal{Y}, Z^0, Z)$.

In conclusion, the mean-field formulation of the optimal control problem of the regulator is given by $\inf_{\beta \in \mathbb{H}^2(\F^0;[0,\infty))}\mathcal{J}^0(\beta)$, where
\begin{align}
&\label{eqn: convex target function mf regulator}
    \mathcal{J}^0(\beta) \\
    &\nonumber\qquad = \E\Bigg[ \int_0^T (\langle Q^{0;X}_s \mathcal{X}_s,\mathcal{X}_s\rangle+\langle Q^{0;Y}_s \mathcal{Y}_s,\mathcal{Y}_s\rangle + \langle \bar{Q}^{0;X}_s \bar{\mathcal{X}}_s, \bar{\mathcal{X}}_s\rangle+ \langle \bar{Q}^{0;Y}_s \bar{\mathcal{Y}}_s, \bar{\mathcal{Y}}_s\rangle +a(s)(b(s) -  c_{2,x})( \beta_s)^2\\
    &\nonumber\qquad \qquad  -2(A^0 - \theta)\beta_s+ 2 \langle S^{0;Y}_s \bar{\mathcal{Y}}_s , \beta_s \rangle + 2 \langle q^{0;X}_s , \mathcal{X}_s\rangle + 2 \langle q^{0;Y}_s ,\mathcal{Y}_s\rangle + \gamma_5) ds + \langle H^0 \mathcal{X}_T , \mathcal{X}_T\rangle\\
    &\nonumber\qquad\qquad+ \langle h^0, \mathcal{X}_T\rangle+ \langle H^1\bar{\mathcal{X}}_T, \bar{\mathcal{X}}_T\rangle + 2B_T\langle h^1,\bar{\mathcal{X}}_T\rangle + B^2_T\Bigg],
\end{align}
subject to \eqref{eqn: eq mf FBSDE company}. 

\begin{remark}[On the mean-field formulation of the problem]
\label{rmk: defence of mfg}
One might wonder whether the model could be studied directly in a finite-dimensional setting, that is, for an economy populated by a finite number $N$ of firms. Although this is in principle possible, the analysis becomes considerably more intricate due to the impact of idiosyncratic noises on the equilibrium price and, consequently, on the agents’ strategies. 

In the $N$-player game, the equilibrium price $\w^N$ defined in \eqref{eqn: eq price process N} depends on the optimal controls of all firms and on the regulator’s policy. Since each firm’s optimal control reacts to its own idiosyncratic noise, the price $\w^N$ inherits a dependence on all firms’ individual shocks. As a result, the control of any given firm becomes correlated with the idiosyncratic noises of all other firms through a recursive structure that is extremely difficult to characterise. 

Passing to the mean-field limit resolves this difficulty. In the mean-field framework, the equilibrium price $\wmf$ becomes adapted to the common noise $\F^0$ only, as the aggregate effect of idiosyncratic noises cancels out in the limit. Consequently, each representative firm’s optimal control is adapted to its own idiosyncratic noise and to the common noise, but not to the idiosyncratic shocks of others. This greatly simplifies the structure of the equilibrium and enhances tractability.

A similar argument applies to the regulator’s problem. In the finite-dimensional case, the regulator realistically observes the common noise and the equilibrium price $\w^N$, which is itself correlated with the idiosyncratic noises of all firms. Therefore, the regulator’s control is adapted to the filtration $\mathcal{F}^{W^0, \w}$, where $\w$ depends recursively on the regulator’s own optimal policy. This feedback loop makes it extremely difficult to obtain explicit characterisations of the equilibrium. In the mean-field limit, however, the equilibrium price depends only on the common noise, so the regulator’s control becomes adapted to $\F^0$ alone, restoring analytical tractability.

Finally, it is worth noting that the mean-field formulation naturally accommodates a \emph{multi-population} extension, in which firms belong to different groups with heterogeneous technological or environmental characteristics, for instance, industries that rely more or less heavily on emissions to sustain their business models. In such a setting, one may reasonably assume that each population faces its own representative control problem within the same aggregate economy. As long as populations do not share additional sources of common information inaccessible to the others, the mean-field limit ensures that idiosyncratic noises continue to average out at the population level. Hence, the analytical simplifications afforded by the mean-field framework remain valid, and the extension to multiple populations can be obtained straightforwardly.
\end{remark}

\section{Analysis of MF optimal control problem subject to controlled FBSDE}\label{section_solution_MF_regulator_cpo}

The aim of this section is to study the solutions of the optimal control problem defined in Equations \eqref{eqn: eq mf FBSDE company} and \eqref{eqn: convex target function mf regulator}. This is a partial-information linear quadratic optimal control problem, where the state variable solves a controlled FBSDE system. We notice that the target function can be rewritten as 
\begin{align*}
    \mathcal{J}^0(\beta)    &= \bar{\mathcal{J}}^0(\beta) + \E\bigg[\int_0^T\bigg( \langle Q^{0,X}(\mathcal{X}_s - \bar{\mathcal{X}}_s), \mathcal{X}_s - \bar{\mathcal{X}}_s\rangle + \langle Q^{0,Y}(\mathcal{Y}_s - \bar{\mathcal{Y}}_s), \mathcal{Y}_s - \bar{\mathcal{Y}}_s\rangle \bigg) ds \\
    &\qquad+ \langle H^0 ( \mathcal{X}_T - \bar{\mathcal{X}}_T), \mathcal{X}_T - \bar{\mathcal{X}}_T\rangle \bigg],
\end{align*}
where 
\begin{align}
    &\label{eqn: convex target function mf regulator projected}
    \bar{\mathcal{J}}^0(\beta) \\
    &\nonumber\qquad = \E\Bigg[ \int_0^T \bigg(\langle (Q^{0;X}_s + \bar{Q}^{0;X}_s) \bar{\mathcal{X}}_s,\bar{\mathcal{X}}_s\rangle+\langle (Q^{0;Y}_s + \bar{Q}^{0;Y}_s) \bar{\mathcal{Y}}_s,\bar{\mathcal{Y}}_s\rangle +a(t)(b(s) -  c_{2,x})( \beta_s)^2\\
    &\nonumber\qquad \qquad-2(A^0 - \theta)\beta_s  + 2  S^{0;Y}_s \bar{\mathcal{Y}}_s  \beta_s  + 2 \langle q^{0;X}_s , \bar{\mathcal{X}}_s\rangle + 2 \langle q^{0;Y}_s ,\bar{\mathcal{Y}}_s\rangle + \gamma_5\bigg) ds + \langle H^0  \bar{\mathcal{X}}_T , \bar{\mathcal{X}}_T\rangle \\
    &\nonumber\qquad\qquad + \langle h^0, \bar{\mathcal{X}}_T\rangle+ \langle H^1\bar{\mathcal{X}}_T, \bar{\mathcal{X}}_T\rangle + 2B_T \langle h^1, \bar{\mathcal{X}}_T\rangle + B^2_T\rangle\Bigg].
\end{align}
We prove now the following result
\begin{lemma}
\label{lemma: filtering error independent of control}
\begin{equation*}\E\bigg[\int_0^T\bigg( \langle Q^{0,X}(\mathcal{X}_s - \bar{\mathcal{X}}_s), \mathcal{X}_s - \bar{\mathcal{X}}_s\rangle + \langle Q^{0,Y}(\mathcal{Y}_s - \bar{\mathcal{Y}}_s), \mathcal{Y}_s - \bar{\mathcal{Y}}_s\rangle \bigg) ds + \langle H^0  (\mathcal{X}_T - \bar{\mathcal{X}}_T), \mathcal{X}_T - \bar{\mathcal{X}}_T\rangle \bigg]
\end{equation*}
does not depend on $\beta$.
\end{lemma}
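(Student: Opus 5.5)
The plan is to show that the fluctuation processes $\delta\mathcal{X} := \mathcal{X} - \bar{\mathcal{X}}$ and $\delta\mathcal{Y} := \mathcal{Y} - \bar{\mathcal{Y}}$ solve a forward--backward system in which $\beta$ does not appear. Their joint law, and hence the expectation in the statement, is then independent of $\beta$.

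\textbf{Step 1: equation for the conditional means.} Take conditional expectations with respect to $\ItF^0_t$ in \eqref{eqn: eq mf FBSDE company}. Since $\beta\in\mathbb{H}^2(\F^0;\R)$, the term $Q_5\beta_t$ is $\ItF^0_t$-measurable. Since $W$ is independent of $W^0$, the stochastic integrals against $W$ have zero conditional mean (e.g.\ \cite{carmona2018probabilistic2}). This yields an FBSDE for $(\bar{\mathcal{X}},\bar{\mathcal{Y}})$ driven only by $W^0$, in which $\beta$ enters linearly.

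The one delicate term is $\sigma(\mathcal{X}_t)dW_t$, which is linear in $K$ through $\sigma^K K_t\, dW^K_t$. Its conditional mean vanishes because the integrand is square integrable. The term $(A_3^K)^\top Z^K$ in the backward drift must be handled too: its conditional mean is $(A_3^K)^\top \overline{Z^K}$, and this is what appears in the mean equation.

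\textbf{Step 2: subtract.} The coefficients are affine and the mean-field terms $Q_6\bar{\mathcal{Y}}$, $Q_2\bar{\mathcal{X}}$ are $\F^0$-adapted. Subtracting the mean equation from \eqref{eqn: eq mf FBSDE company} therefore cancels $q_3$, $Q_5\beta$, $Q_6\bar{\mathcal{Y}}$, $2Q_2\bar{\mathcal{X}}$, $2q_1$ and the $A_5\,dW^0$ term. What remains is
\[
d\delta\mathcal{X}_t = \big[A_1\delta\mathcal{X}_t - \tfrac12 A_2Q_3^{-1}A_2^\top\delta\mathcal{Y}_t\big]dt + \sigma(\mathcal{X}_t)dW_t,\qquad \delta\mathcal{X}_0 = 0,
\]
together with a linear backward equation
\[
d\delta\mathcal{Y}_t = -\big(A_1^\top\delta\mathcal{Y}_t + (A_3^K)^\top\delta Z^K_t + 2Q_1\delta\mathcal{X}_t\big)dt + Z_tdW_t + \delta Z^0_t dW^0_t,\qquad \delta\mathcal{Y}_T = Q_4\delta\mathcal{X}_T .
\]
This system is \emph{not} closed, because the volatility $\sigma(\mathcal{X}_t)$ involves $\sigma^K K_t = \sigma^K(\delta K_t + \bar K_t)$, and $\bar K$ depends on $\beta$. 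This is the main obstacle.

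\textbf{Step 3: the plan for the obstacle.}

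First I will check whether the problem disappears on its own. The $K$-component of the forward drift involves only $V$. Inspecting \eqref{eqn: eq mf FBSDE company}, $V$ is driven by $Y$ only through $\hat\alpha^f$, and $Y$ has no dependence on $\beta$ beyond what enters via the $X$-equation. I will try to decouple the system via the linear-quadratic ansatz $\mathcal{Y}_t = P_t\mathcal{X}_t + \Pi_t\bar{\mathcal{X}}_t + \eta_t$, with $P$ deterministic and $\Pi,\eta$ solving Riccati/linear equations, where $\eta$ is $\F^0$-adapted and carries all the $\beta$-dependence. Under this ansatz $\delta\mathcal{Y} = P\,\delta\mathcal{X}$. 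The fluctuation dynamics then become a linear SDE for $\delta\mathcal{X}$ whose only $\beta$-dependent input is the multiplicative noise $\sigma^K\bar K_t\,dW^K_t$.

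If $\bar K$ turns out to be $\beta$-independent, the argument closes. This would hold if, for example, the $K$-block of $P$, $\Pi$, $\eta$ decouples from the $X$-block. I expect this to be true because $Q_1,Q_2,q_1$ act only on $K$ and $Q_4$ only on $X$, but it must be checked via the structure of $A_2$: $\hat\alpha^f$ couples $V$ and $Y$, so the check may fail.

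If it does fail, I will fall back on a Yamada--Watanabe argument in the spirit of \cite{carmona2016}. By Step 2 and the $\Psi$ representation of Subsection~\ref{subsect_MFG_passage_to_limit}, $(\delta\mathcal{X},\delta\mathcal{Y})$ is a measurable functional of $(W, W^0,\bar K)$. Since $W$ is independent of $\F^0$, the conditional law of the fluctuations given $\ItF^0_T$ depends on $\beta$ only through $\bar K$. In that case the cleanest route is to show directly from the mean equation of Step 1 that the $(\bar K,\bar V)$ subsystem does not contain $\beta$.

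Once the fluctuations are shown to be independent of $\beta$ in law, the expectation in the lemma, which is a quadratic functional of $(\delta\mathcal{X},\delta\mathcal{Y})$ with deterministic weights $Q^{0,X}_s$, $Q^{0,Y}_s$, $H^0$, is independent of $\beta$.
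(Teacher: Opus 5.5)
\textbf{There is a genuine gap in Step 3, and it cannot be closed.} The step you need, that $\bar K$ does not depend on $\beta$, is false. Your Yamada--Watanabe fallback ends by reducing to this same check.

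\emph{Why $\bar K$ depends on $\beta$.} Contrary to your remark, the $K$-drift involves $Y$ as well as $V$. Three facts combine:
\begin{itemize}
\item The second component of the adjoint drift vanishes identically, since the second rows of $A_1^\top$, $(A^K_3)^\top$, $Q_1$, $Q_2$, $q_1$ are zero. Hence $Y_t=\lambda\E[X_T\mid\ItF_t]$ and $\bar Y_t=\lambda\E[\bar X_T\mid\ItF^0_t]$.
\item $\beta$ enters $d\bar X_t$ with coefficient $-1$.
\item Through $\hat\alpha^f$, $\bar Y$ enters $d\bar K_t$ with coefficient $a^fa^e/(2c_{2,f})\neq0$. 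This is exactly the off-diagonal entry of $-\tfrac12A_2Q_3^{-1}A_2^\top$ that would have to vanish for your block decoupling.
\end{itemize}
Concretely, perturb $\beta$ by $\epsilon h$, with $h$ deterministic and $\int_0^Th\,dt\neq0$.
\begin{itemize}
\item The response of $\bar Y$ is a constant $y=\lambda x_T$. It solves $(1-\lambda\Gamma)y=-\lambda\int_0^Th\,dt$ for some constant $\Gamma$, and $1-\lambda\Gamma\neq0$ by uniqueness for the mean system. So $y\neq0$.
\item The response $(k,v)$ of $(\bar K,\bar V)$ solves a linear two-point problem with $k_0=0$, $v_T=0$. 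Its only forcing is $\tfrac{a^fa^e}{2c_{2,f}}y$ in the $k$-equation.
\item If $k\equiv0$, then $v\equiv0$, which forces $y=0$, a contradiction. So $k\not\equiv0$.
\end{itemize}

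\emph{On Step 2.} Your Step 2 is correct, and sharper than the paper's own proof. The paper writes the error diffusion as $A^K_3\hat{\mathcal{X}}_t\,dW_t$, dropping $A^K_3\bar{\mathcal{X}}_t\,dW^K_t$ and $A^E_4\,dW^E_t$. The resulting system is homogeneous with zero initial datum, so under the uniqueness the paper invokes its only solution is zero. Yet the paper's later decomposition of $d(\mathcal{X}-\bar{\mathcal{X}})$ keeps $C_{\mathcal{X}}\bar{\mathcal{X}}_t\,dW^K_t$.

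\textbf{The lemma itself fails} when $\sigma^K\lambda a^ea^f\neq0$ and $\gamma<1$, as in the numerical experiments. Use $\mathcal{Y}-\bar{\mathcal{Y}}=P(\mathcal{X}-\bar{\mathcal{X}})$ and set $L_t:=A_1+A_{\mathcal{Y}}P_t$. Then $\Sigma_t:=\E[(\mathcal{X}_t-\bar{\mathcal{X}}_t)(\mathcal{X}_t-\bar{\mathcal{X}}_t)^\top]$ solves
\[
\dot\Sigma_t=L_t\Sigma_t+\Sigma_tL_t^\top+A^K_3\Sigma_t(A^K_3)^\top+(\sigma^K)^2\E[\bar K_t^2]\,e_1e_1^\top+A^E_4(A^E_4)^\top,\qquad\Sigma_0=0,
\]
with $e_1=(1,0)^\top$. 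The cross terms vanish because $\E[\mathcal{X}_t-\bar{\mathcal{X}}_t\mid\ItF^0_t]=0$.

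Let $\Lambda$ solve $-\dot\Lambda_t=L_t^\top\Lambda_t+\Lambda_tL_t+(A^K_3)^\top\Lambda_tA^K_3+Q^{0,X}_t+P_t^\top Q^{0,Y}_tP_t$, with $\Lambda_T=H^0$. By duality, the quantity in the lemma equals
\[
\int_0^T\big((\sigma^K)^2\Lambda^{11}_t\E[\bar K_t^2]+(A^E_4)^\top\Lambda_tA^E_4\big)dt .
\]
Here $\Lambda^{11}_t>0$ on $[0,T)$, because the $(1,1)$ entry of $Q^{0,X}_t$ is positive. Under the perturbation above, this quantity changes by $2\epsilon I_1+\epsilon^2I_2$, for some $I_1\in\R$ and $I_2=(\sigma^K)^2\int_0^T\Lambda^{11}_tk_t^2\,dt>0$. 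So it depends on $\beta$.

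\emph{What can be salvaged.} The filtering cost is a $\beta$-independent constant plus $\E\int_0^T(\sigma^K)^2\Lambda^{11}_t\bar K_t^2\,dt$. The reduced objective $\bar{\mathcal{J}}^0$ should therefore carry the extra weight $(\sigma^K)^2\Lambda^{11}_t$ on $\bar K_t^2$; the problem stays linear--quadratic. Alternatively, if the capital noise is additive, the error system is genuinely free of $\beta$, and your argument, or the paper's, goes through.
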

\begin{proof}
We recall that the solution $(\mathcal{X},\mathcal{Y},Z^0,Z)$ of the FBSDE \eqref{eqn: eq mf FBSDE company} is adapted to $\F$ since $\beta\in \mathbb{H}^2(\F^0;[0,\infty))$. In addition, let us notice that $\ItF^0_t = \ItF^0_s \vee \sigma\{W^0_u - W^0_s:\ u\in[0,t]\}$, for every $s\in[0,t]$, for every $t\in [0,T]$. Hence, for $s\leq t$, $\E[ \mathcal{X}_s|\ItF^0_t] = \E[ \mathcal{X}_s|\ItF^0_s] = \bar{\mathcal{X}}_s$ and the same holds for $\mathcal{Y}$, $Z^0$ and $Z$. In particular, we have
\begin{align*}
    \bar{\mathcal{X}}_t &= \mathcal{X}^0+ \E\bigg[\int_0^t \big( A_1 \mathcal{X}_s + A_2( -Q_3^{-1} ( q_3  + Q_5\beta_s+ Q_6\overline{\mathcal{Y}}_s+\frac{1}{2} A_2^\top \mathcal{Y}_s)) \big) ds \mid \ItF^0_t\bigg]+ A_5W^0_t\\
        &=  \mathcal{X}^0 + \int_0^t \big( A_1 \bar{\mathcal{X}}_s + A_2( -Q_3^{-1} ( q_3  + Q_5\beta_s+ (Q_6+\frac{1}{2}A_2^\top) \bar{\mathcal{Y}}_s)) \big) ds+ A_5W^0_t
\end{align*}
Reasoning similarly to \cite[Theorem 2.2]{wang2018introduction}, which is based on \cite[Theorem 8.1]{liptser2013statistics}, we have that
\begin{align*}
\bar{\mathcal{Y}}_t &=\mathcal{Y}_0 -\E\bigg[\int_0^t\big( A_1^\top \mathcal{Y}_s + (A^K_3)^\top Z^{K}_t + 2Q_1\mathcal{X}_s + Q_2\overline{\mathcal{X}}_s + 2q_1\big) ds - \int_0^tZ_s dW_s\\
&\qquad - \int_0^tZ^{0}_s dW^0_s\biggm| \ItF^0_t\bigg]\\
&= \mathcal{Y}_0- \int_0^t\big( A_1^\top \bar{\mathcal{Y}}_s + (A^K_3)^\top \bar{Z}^{K}_t + 2(Q_1+Q_2+q_1)\bar{\mathcal{X}}_s \big) ds + \int_0^t\bar{Z}^0_s dW^0_s.
\end{align*}
In conclusion, $(\bar{\mathcal{X}}, \bar{\mathcal{Y}}, \bar{Z}, \bar{Z}^0)$ solves the following FBSDE system 
\begin{equation}
\label{eqn: eq mf FBSDE company projected} 
\begin{cases}
    d\bar{\mathcal{X}}_t&=   \big[ A_1 \bar{\mathcal{X}}_t + A_2\big( -Q_3^{-1} ( q_3  + Q_5\beta_t+( Q_6 +\frac{1}{2} A_2^\top)\overline{\mathcal{Y}}_t)\big)\big] dt  + A_5dW^0_t,\\
    d\bar{\mathcal{Y}}_t &= - \big[A^\top_1 \bar{\mathcal{Y}}_t+ (A^K_3)^\top Z^{K}_t + 2(Q_1 + Q_2 +q_1) \bar{\mathcal{X}}_t \big] dt+ \bar{Z}^0_t dW^0_t, \\
\bar{\mathcal{X}}_0 &= (K_0,X_0)^\top,\\
\bar{\mathcal{Y}}_T &= Q_4\bar{\mathcal{X}}_T.
\end{cases}
\end{equation}

It is sufficient now to show that the filtering error,
\begin{equation*}
(\hat{\mathcal{X}} , \hat{\mathcal{Y}}, \hat{Z}) := 
(\mathcal{X} - \bar{\mathcal{X}}, \mathcal{Y} - \bar{\mathcal{Y}}, Z - \bar{Z}),
\end{equation*}
does not depend on $\beta$. We notice that $(\hat{\mathcal{X}} , \hat{\mathcal{Y}})$ solve the following FBSDE system
\begin{equation*}
    \begin{cases}
d\hat{\mathcal{X}}_t&=   \big( A_1 \hat{\mathcal{X}}_t - \frac{1}{2}A_2Q_3^{-1}  A_2^\top\hat{\mathcal{Y}}_t\big) dt  + A^K_3\hat{\mathcal{X}}_tdW_t,\\
d\hat{\mathcal{Y}}_t &= - (A^\top_1 \hat{\mathcal{Y}}_t+ (A^K_3)^\top \hat{Z}^{K}_t+ 2Q_1 \hat{\mathcal{X}}_t )dt+ \hat{Z}_t dW_t+ \hat{Z}^0_t dW^0_t, \\
\hat{\mathcal{X}}_0 &= 0,\\
\hat{\mathcal{Y}}_T &= Q_4\hat{\mathcal{X}}_T.
    \end{cases}
\end{equation*}
If uniqueness of strong solutions holds, by Yamada-Watanabe representation theorem \cite[Theorem 1.33]{carmona2018probabilistic2}, $ (\hat{\mathcal{X}}, \hat{\mathcal{Y}}, \hat{Z}^0, \hat{Z})$ is uniquely determined by $(W^0,W)$ and, in addition, does not depend on $\beta$. 
\end{proof}
In conclusion, we can reduce the regulator's problem to $\inf_{\beta \in \mathbb{H}^2(\F^0; \R)}\bar{\mathcal{J}}^0(\beta)$, introduced in \eqref{eqn: convex target function mf regulator projected} subject to $(\bar{\mathcal{X}},\bar{\mathcal{Y}})$, solution of \eqref{eqn: eq mf FBSDE company projected}. 

\subsection{Stochastic maximum principle in the regulator framework}

We start by recalling the FBSDEs system that defines the dynamics for the state: 
\begin{equation}
\label{eqn: fbsde mf repr company}
\begin{cases}
d\mathcal{X}_t 
    &= [A_{\mathcal{X}} \mathcal{X}_t + A_{\mathcal{Y}} \mathcal{Y}_t + \overline{A}_{\mathcal{Y}} \overline{{\mathcal{Y}}}_t + A_\beta \beta_t + a_\mathcal{X}]\, dt + [C_{\mathcal{X}} \mathcal{X}_t + c_K]\,dW_t^{K} +c_X\,dW_t^{X} + c_0\,dW_t^{0}, \\
    dB_t &= \beta_t dt,\\
    d{\mathcal{Y}}_t &= [B_{\mathcal{X}} \mathcal{X}_t + \overline{B}_{\mathcal{X}} \overline{{\mathcal{X}}}_t + B_{\mathcal{Y}} {\mathcal{Y}}_t + B_Z {Z}^K_t + b_\mathcal{Y}]\,dt  + {Z}^K_t\,d{W}_t^{K} + {Z}^X_t\,dW_t^{X}+ {Z}^0_t\,dW_t^{0},\\
    {\mathcal{X}}_0 &= (K_0,X_0)^\top,\\
    {\mathcal{Y}}_T &= Q_4{\mathcal{X}}_T.
\end{cases}
\end{equation}
where
\begin{alignat*}{5}
    A_{\mathcal{X}} &= A_1, &\quad
    A_{\mathcal{Y}} &= -\tfrac{1}{2}A_2 Q_3^{-1} A_2^\top, &\quad
    \overline{A}_{\mathcal{Y}} &= -A_2Q_3^{-1}Q_6, &\quad
    A_\beta &= -A_2Q_3^{-1}Q_5, &\quad
    a_{\mathcal{X}} &= -A_2 Q_3^{-1}q_3, \\[6pt]
    B_{\mathcal{X}} &= -2Q_1, &\quad
    \overline{B}_{\mathcal{X}} &= -2Q_2, &\quad
    B_{\mathcal{Y}} &= -A_1^\top, &\quad
    B_Z &= -A_3^K,&\quad
    b_{\mathcal{Y}} &= -2q_1, \\[6pt]
    C_{\mathcal{X}} &= A_3^K, &\quad
    c_K &= 0, &\quad
    c_X &= A_4^E, &\quad
    c_0 &= A_5.
\end{alignat*}
The state variable is now $(\bar{\mathcal{X}}_t, B_t)$. Nevertheless, because the process $B$ does not affect the dynamics of either $\bar{\mathcal{X}}$ or $\mathcal{Y}$, we consider $\bar{\mathcal{X}}$ and $B$ as distinct components.

The orthogonal decomposition of the states satisfies the following dynamics
\begin{align*}
    d\overline{\mathcal{X}}_t 
    &= [A_{\mathcal{X}} \overline{{\mathcal{X}}}_t + A_{\mathcal{Y}} \overline{\mathcal{Y}}_t + \overline{A}_{\mathcal{Y}} \overline{{\mathcal{Y}}}_t + A_\beta \beta_t + a_\mathcal{X}]\,dt + c_0\,dW_t^{0} \\
    &= [A_{\mathcal{X}} \overline{{\mathcal{X}}}_t + (A_{\mathcal{Y}} + \overline{A}_{\mathcal{Y}}) \overline{{\mathcal{Y}}}_t + A_\beta \beta_t + a_\mathcal{X}]\,dt + c_0\,dW_t^{0} \\
    d({\mathcal{X}} - \overline{{\mathcal{X}}})_t 
    &= [A_{\mathcal{X}}({\mathcal{X}} - \overline{{\mathcal{X}}})_t + A_{\mathcal{Y}}({\mathcal{Y}} - \overline{{\mathcal{Y}}})_t]\,dt + (C_{\mathcal{X}}({\mathcal{X}} - \overline{{\mathcal{X}}})_t + C_{\mathcal{X}} \overline{{\mathcal{X}}}_t +c_K )\,dW_t^{K} + c_X\,dW_t^{X} \\
    d\overline{\mathcal{Y}}_t 
    &= [(B_{\mathcal{X}} + \overline{B}_{\mathcal{X}}) \overline{\mathcal{X}}_t + B_{\mathcal{Y}} \overline{\mathcal{Y}}_t + B_Z \overline{Z}^K_t + b_\mathcal{Y}]\,dt + {Z}_t^{0}\,dW_t^{0} \\
    d({\mathcal{Y}} - \overline{{\mathcal{Y}}})_t 
    &= [B_{\mathcal{X}}({\mathcal{X}} - \overline{{\mathcal{X}}})_t + B_{\mathcal{Y}}({\mathcal{Y}} - \overline{{\mathcal{Y}}}) + B_Z({Z}^K_t - \overline{Z}^K_t)]\,dt + {Z}_t^{K}\,dW_t^{K} + {Z}_t^{X}\,dW^E_t
\end{align*}

Assuming the following ansatz for the form of the adjoint variable $\mathcal{Y}$
\begin{equation}\label{eq:ANSATZ_Y}
    {\mathcal{Y}}_t 
    = {P}_t ({\mathcal{X}}_t - \overline{{\mathcal{X}}}_t) + Q_t \overline{{\mathcal{X}}}_t +\Psi_t+ \varphi_t,
\end{equation}
with $P$, $Q$ and $\varphi$ deterministic function of time, such that $P_T=Q_4=Q_T$ and $\varphi_T=0$, and
\begin{align*}
    \Psi_t := \Psi_T-\int_t^T \psi_s ds + \int_t^T Z^\Psi_s dW^0_s,\quad \Psi_T = 0
\end{align*} 
with $\psi$ and $Z^\Psi$ stochastic processes adapted to $\mathbb{F}^0$, an application of the Ito formula yields
\begin{align*}
    d{\mathcal{Y}}_t 
    & = \left[ \dot{P}_t ({\mathcal{X}}_t - \overline{{\mathcal{X}}}_t) + {P}_t A_{\mathcal{X}} ({\mathcal{X}}_t - \overline{{\mathcal{X}}}_t) + {P}_t A_{\mathcal{Y}} ({\mathcal{Y}}_t - \overline{{\mathcal{Y}}}_t)\right]dt + \left[{P}_t C_{\mathcal{X}}({\mathcal{X}}_t - \overline{{\mathcal{X}}}_t) + {P}_t C_{\mathcal{X}} \overline{{\mathcal{X}}}_t +P_tc_K\right]dW_t^{K} \\
    & \quad + P_tc_X\,dW_t^{X} +\left[ \dot{Q}_t\overline{{\mathcal{X}}}_t + Q_t A_{\mathcal{X}} \overline{{\mathcal{X}}}_t + Q_t (A_{\mathcal{Y}} + \overline{A}_{\mathcal{Y}})\overline{{\mathcal{Y}}}_t + Q_t A_\beta \beta_t + Q_t a_\mathcal{X}\right]dt \\
    & \quad + (Q_t c_{0}-Z^\Psi_t)\,dW_t^{0} +\psi_t dt+ \dot{\varphi}_t\,dt.
\end{align*}
From the ansatz in \eqref{eq:ANSATZ_Y}, we derive 
\begin{equation}\label{eq:ansatz_cons}
    \overline{{\mathcal{Y}}}_t 
    = Q_t \overline{{\mathcal{X}}}_t + \Psi_t + \varphi_t
    \qquad 
    {\mathcal{Y}}_t - \overline{{\mathcal{Y}}}_t 
    = {P}_t({\mathcal{X}}_t - \overline{{\mathcal{X}}}_t)
\end{equation}
and so, replacing these expression in the previous equation, we get:

\begin{align}\label{eq:iniz_coef_dY}
    d{\mathcal{Y}}_t & = \left[ \dot{P}_t ({\mathcal{X}}_t - \overline{{\mathcal{X}}}_t) + {P}_t A_{\mathcal{X}} ({\mathcal{X}}_t - \overline{{\mathcal{X}}}_t) + {P}_t A_{\mathcal{Y}} {P}_t({\mathcal{X}}_t - \overline{{\mathcal{X}}}_t)\right]\,dt \\
    \nonumber
    & \quad + \left[{P}_t C_{\mathcal{X}}({\mathcal{X}}_t - \overline{{\mathcal{X}}}_t) + {P}_t C_{\mathcal{X}} \overline{{\mathcal{X}}}_t +P_tc_K\right]\,dW_t^{K} +P_tc_X\,dW_t^{X} \\
    \nonumber
    &\quad +\left[ \dot{Q}_t\overline{{\mathcal{X}}}_t + Q_t A_{\mathcal{X}} \overline{{\mathcal{X}}}_t + Q_t (A_{\mathcal{Y}} + \overline{A}_{\mathcal{Y}})\left[Q_t \overline{{\mathcal{X}}}_t + \Psi_t + \varphi_t\right]+ Q_t A_\beta \beta_t + Q_t a_\mathcal{X}\right]\,dt \\
    \nonumber
    &\quad + (Q_t c_{0}-Z^\Psi_t)\,dW_t^{0} +\psi_t dt+ \dot{\varphi}_t\,dt\\
    \nonumber
    & = \Big\{ \left[\dot{{P}}_t +{P}_t A_{\mathcal{Y}} {P}_t + {P}_t A_{\mathcal{X}}\right]({\mathcal{X}}_t - \overline{{\mathcal{X}}}_t) + \left[ \dot{Q}_t + Q_t (A_{\mathcal{Y}}+ \overline{A}_{\mathcal{Y}}) Q_t + Q_t A_{\mathcal{X}} \right] \overline{{\mathcal{X}}}_t \\ 
    \nonumber
    &\quad  +\left[ \psi_t + Q_t A_\beta \beta_t + Q_t (A_{\mathcal{Y}}+ \overline{A}_{\mathcal{Y}})\Psi_t \right] + \left[\dot{\varphi}_t +Q_t a_\mathcal{X} + Q_t (A_{\mathcal{Y}}+ \overline{A}_{\mathcal{Y}})\varphi_t\right] \Big\}\,dt\\  
    \nonumber
    & \quad + \left[{P}_t C_{\mathcal{X}}({\mathcal{X}}_t - \overline{{\mathcal{X}}}_t) + {P}_t C_{\mathcal{X}} \overline{{\mathcal{X}}}_t +P_tc_K\right]\,dW_t^{K} +P_tc_X\,dW_t^{X} + (Q_t c_{0}-Z^\Psi_t)\,dW_t^{0}
\end{align}

Going back to the original expression for $\mathcal{Y}$ written in a convenient form (i.e. according to the orthogonal decomposition) and then exploiting the identities in Equation \eqref{eq:ansatz_cons}, we obtain
\begin{align*}
    d{\mathcal{Y}}_t 
    & = [B_{\mathcal{X}} ({\mathcal{X}}_t - \overline{{\mathcal{X}}}_t) + (B_{\mathcal{X}}+\overline{B}_{\mathcal{X}}) \overline{{\mathcal{X}}}_t + B_{\mathcal{Y}} ({\mathcal{Y}}_t - \overline{\mathcal{Y}}_t) + B_{\mathcal{Y}}  \overline{\mathcal{Y}}_t + B_Z ({Z}^K_t-\overline{Z}^K_t) + B_Z \overline{Z}^K_t + b_\mathcal{Y}]\,dt \\
    & \quad + {Z}^K_t\,d{W}_t^{K} + {Z}^X_t\,dW_t^{X}+ {Z}^0_t\,dW_t^{0}\\
    & = [B_{\mathcal{X}} ({\mathcal{X}}_t - \overline{{\mathcal{X}}}_t) + (B_{\mathcal{X}}+\overline{B}_{\mathcal{X}}) \overline{{\mathcal{X}}}_t + B_{\mathcal{Y}} ({\mathcal{Y}}_t - \overline{\mathcal{Y}}_t) + B_{\mathcal{Y}}  \overline{\mathcal{Y}}_t + B_Z ({Z}^K_t-\overline{Z}^K_t) + B_Z \overline{Z}^K_t + b_\mathcal{Y}]\,dt \\
    & \quad + {Z}^K_t\,d{W}_t^{K} + {Z}^X_t\,dW_t^{X}+ {Z}^0_t\,dW_t^{0}\\
    & = \Big\{B_{\mathcal{X}} ({\mathcal{X}}_t - \overline{{\mathcal{X}}}_t) + (B_{\mathcal{X}}+\overline{B}_{\mathcal{X}}) \overline{{\mathcal{X}}}_t + B_{\mathcal{Y}} {P}_t({\mathcal{X}}_t - \overline{{\mathcal{X}}}_t) + B_{\mathcal{Y}}  [Q_t \overline{{\mathcal{X}}}_t + \Psi_t + \varphi_t] \\
    & \quad + B_Z ({Z}^K_t-\overline{Z}^K_t) + B_Z \overline{Z}^K_t + b_\mathcal{Y}\Big\}\,dt + {Z}^K_t\,d{W}_t^{K} + {Z}^X_t\,dW_t^{X}+ {Z}^0_t\,dW_t^{0}.
\end{align*}
We start by matching the volatility terms and we get
\begin{align}
    {Z}^K_t = \left[{P}_t C_{\mathcal{X}}({\mathcal{X}}_t - \overline{{\mathcal{X}}}_t) + {P}_t C_{\mathcal{X}} \overline{{\mathcal{X}}}_t +P_tc_K\right],
    \quad
    {Z}^X_t = P_tc_X,
    \quad
    {Z}^0_t = (Q_t c_{0}-Z^\Psi_t).
\end{align}
Then, we can exploit the expression for  ${Z}^K_t$ and write
\begin{align}\label{eq:fin_coef_dY}
    d{\mathcal{Y}}_t 
    & = \Big\{(B_{\mathcal{X}}+B_{\mathcal{Y}} {P}_t) ({\mathcal{X}}_t - \overline{{\mathcal{X}}}_t) + (B_{\mathcal{X}}+\overline{B}_{\mathcal{X}}+   B_{\mathcal{Y}} Q_t) \overline{{\mathcal{X}}}_t +   B_{\mathcal{Y}}  \Psi_t +  B_{\mathcal{Y}}\varphi_t \\ 
    &\nonumber \quad + B_Z ({Z}^K_t-\overline{Z}^K_t) + B_Z \overline{Z}^K_t + b_\mathcal{Y}\Big\}\,dt + {Z}^K_t\,d{W}_t^{K} + {Z}^X_t\,dW_t^{X}+ {Z}^0_t\,dW_t^{0}\\  
    & \nonumber= \Big\{(B_{\mathcal{X}}+B_{\mathcal{Y}} {P}_t) ({\mathcal{X}}_t - \overline{{\mathcal{X}}}_t) + (B_{\mathcal{X}}+\overline{B}_{\mathcal{X}}+   B_{\mathcal{Y}} Q_t) \overline{{\mathcal{X}}}_t +   B_{\mathcal{Y}} \Psi_t + B_{\mathcal{Y}}\varphi_t \\  
    &\nonumber \quad + B_Z {P}_t C_{\mathcal{X}}({\mathcal{X}}_t - \overline{{\mathcal{X}}}_t) + B_Z {P}_t C_{\mathcal{X}} \overline{{\mathcal{X}}}_t + B_Z P_tc_K + b_\mathcal{Y}\Big\}\,dt + {Z}^K_t\,d{W}_t^{K} + {Z}^X_t\,dW_t^{X}+ {Z}^0_t\,dW_t^{0}\\
    &\nonumber = \Big\{(B_{\mathcal{X}}+B_{\mathcal{Y}} {P}_t  + B_Z {P}_t C_{\mathcal{X}}) ({\mathcal{X}}_t - \overline{{\mathcal{X}}}_t) + (B_{\mathcal{X}}+\overline{B}_{\mathcal{X}}+   B_{\mathcal{Y}} Q_t + B_Z {P}_t C_{\mathcal{X}} ) \overline{{\mathcal{X}}}_t + B_{\mathcal{Y}}  \Psi_t + B_{\mathcal{Y}}\varphi_t \\  
    &\nonumber \quad + B_Z P_tc_K + b_\mathcal{Y}\Big\}\,dt + {Z}^K_t\,d{W}_t^{K} + {Z}^X_t\,dW_t^{X}+ {Z}^0_t\,dW_t^{0}.
\end{align}
Finally, matching term by term the components in the drift resp. in \eqref{eq:fin_coef_dY} and \eqref{eq:iniz_coef_dY}, we get
\begin{alignat}{2}
    & \dot{{P}}_t +{P}_t A_{\mathcal{Y}} {P}_t + {P}_t A_{\mathcal{X}}  - B_{\mathcal{Y}} {P}_t  - B_Z {P}_t C_{\mathcal{X}} - B_{\mathcal{X}} = 0,&& {P_T = Q_4}\\
    & \dot{Q}_t + Q_t (A_{\mathcal{Y}}+ \overline{A}_{\mathcal{Y}}) Q_t + Q_t A_{\mathcal{X}} - B_Z {P}_t C_{\mathcal{X}} - B_{\mathcal{X}} - \overline{B}_{\mathcal{X}} -   B_{\mathcal{Y}} Q_t  = 0,\qquad&& {Q_T = Q_4}\\
    &\label{eqn: psi} \psi_t + Q_t A_\beta \beta_t + [Q_t (A_{\mathcal{Y}}+ \overline{A}_{\mathcal{Y}})- B_{\mathcal{Y}}] {\Psi_t} =0,&& {\Psi_T = 0}\\
    & \dot{\varphi}_t  + [Q_t (A_{\mathcal{Y}}+ \overline{A}_{\mathcal{Y}}) - B_{\mathcal{Y}}]\varphi_t - B_Z P_tc_K + Q_t a_\mathcal{X} - b_\mathcal{Y} =0, &&{\varphi_T = 0}.
\end{alignat}
Finally, \eqref{eqn: psi}  implies that 
\begin{equation}
    d\Psi_t = - \bigg[Q_t A_\beta \beta_t + [Q_t (A_{\mathcal{Y}}+ \overline{A}_{\mathcal{Y}})- B_{\mathcal{Y}}] \Psi_t\bigg] dt - Z^\Psi_t dW^0_t, \quad {\Psi_T = 0}.
\end{equation}
For the moment let us assume that the solutions of the ODEs and the SDE for $\Psi$ exist uniquely. By Ansatz \eqref{eq:ANSATZ_Y}, we have for $t\in[0,T]$ 
\begin{equation*}
    \begin{cases}
        \bar{\mathcal{Y}}_t = Q_t \bar{\mathcal{X}}_t + \Psi_t + \varphi_t,\\
        \bar{Z}_t = ( P_t C_{\mathcal{X}} \bar{\mathcal{X}}_t + P_t c_K,\quad P_t c_X),\\
        \bar{Z}^0_t := Q_t c_0 {- Z^\Psi_t }.
    \end{cases}
\end{equation*}
We introduce the following notation
\begin{alignat*}{2}
    A^\mathcal{X}_t        &:= A_\mathcal{X} + (A_\mathcal{Y} + \bar{A}_\mathcal{Y})Q_t, \quad & 
    A^\Psi_t               &:= A_\mathcal{Y} + \bar{A}_\mathcal{Y} \\[3pt]
    A^\beta_t              &:= A_\beta, & 
    a^\mathcal{X}_t        &:= (A_\mathcal{Y} + \bar{A}_\mathcal{Y}) \varphi_t + a_\mathcal{X} \\[8pt]
    B^\Psi_t               &:= -Q_t(A_\mathcal{Y} + \bar{A}_\mathcal{Y}) + B_\mathcal{Y}, & 
    B^\beta_t              &:= -Q_tA_\beta \\[8pt]
    Q^\mathcal{X}_t        &:= Q^{0;X}_t + \bar{Q}^{0;X}_t + Q^\top_t(Q^{0;Y}_t + \bar{Q}^{0;Y}_t)Q_t,\quad & 
    Q^{\mathcal{X}\Psi}_t  &:= (Q^{0,Y}_t + \bar{Q}^{0;Y}_t) Q_t \\[3pt]
    Q^\Psi_t               &:= Q^{0,Y}_t + \bar{Q}^{0;Y}_t && \\[8pt]
    q^\mathcal{X}_t        &:= q^{0;X}_t + Q^\top_t q^{0;Y}_t + Q^\top_t (Q^{0;Y}_t + \bar{Q}^{0;Y}_t)\varphi_t, & 
    q^\Psi_t               &:= q^{0;Y}_t + (Q^{0;Y}_t + \bar{Q}^{0;Y}_t)\varphi_t \\[8pt]
    Q^\beta_t              &:= b(t) - c_{2,x}, & 
    q^{\beta\mathcal{X}}_t &:= 2S^{0;Y}_t Q_t \\[3pt]
    q^{\beta\Psi}_t        &:= 2S^{0;Y}_t, & 
    q^\beta_t              &:= 2[S^{0;Y}_t \varphi_t - (A_0 - \theta)]
\end{alignat*}
The state variable becomes
\begin{align*}
    d 
    \begin{pmatrix} 
    \bar{\mathcal{X}}_t\\
    B_t\\
    \Psi_t 
    \end{pmatrix} 
    &= \begin{pmatrix} 
    A_\mathcal{X} \bar{\mathcal{X}}_t + (A_\mathcal{Y} + \bar{A}_{\mathcal{Y}} ) (Q_t \bar{\mathcal{X}}_t + \Psi_t + \varphi_t) + A_\beta \beta_t + a_\mathcal{X} \\
    \beta_t\\ 
    - \{ Q_t A_\beta \beta_t + [Q_t( A_\mathcal{Y} + \bar{A}_{\mathcal{Y}})- B_\mathcal{Y}]\Psi_t \} 
    \end{pmatrix} dt + \begin{pmatrix}c_0 \\0\\ -Z^\Psi_t\end{pmatrix}dW^0_t\\
  &= \begin{pmatrix} 
    [A_\mathcal{X} + (A_\mathcal{Y} + \bar{A}_{\mathcal{Y}}) Q_t ] \bar{\mathcal{X}}_t + (A_\mathcal{Y} + \bar{A}_{\mathcal{Y}} )\Psi_t + A_\beta \beta_t + (A_\mathcal{Y} + \bar{A}_{\mathcal{Y}} )\varphi_t + a_\mathcal{X} \\
    \beta_t\\
     [-Q_t( A_\mathcal{Y} + \bar{A}_{\mathcal{Y}})+ B_\mathcal{Y}]\Psi_t - Q_t A_\beta \beta_t  
    \end{pmatrix} dt + \begin{pmatrix}c_0 \\0\\- Z^\Psi_t \end{pmatrix}dW^0_t\\\nonumber
    & = \begin{pmatrix} 
    A^{\mathcal{X}}_t \bar{\mathcal{X}}_t + A^{\Psi}_t\Psi_t + A^\beta_t \beta_t + a^\mathcal{X}_t \\
    \beta_t\\
     B^{\Psi}_t\Psi_t + B^{\beta}_t \beta_t  
    \end{pmatrix} dt + \begin{pmatrix}c_0 \\ 0 \\ -Z^\Psi_t \end{pmatrix}dW^0_t.
\end{align*}
Under the condition that $b(s) - c_{2,x}>0$ for every $s\in[0,T]$, the optimal control problem has a convex linear quadratic target function in state and control subject to a state variable $(\bar{\mathcal{X}}_t,B_t, \Psi_t)_{t\in[0,T]}$, which is the solution of a linear SDE. By \eqref{eqn: convex target function mf regulator projected}, the target function of the regulator then becomes: 
\begin{align}
    \bar{\mathcal{J}}^0(\beta)  
    & = \mathbb{E}\Bigg[ \int_0^T \bigg\{
    \bigg\langle  \big[(Q^{0;X}_s + \bar{Q}^{0;X}_s) + Q_s^\top (Q^{0;Y}_s + \bar{Q}^{0;Y}_s)Q_s\big]\bar{\mathcal{X}}_s, \bar{\mathcal{X}}_s \bigg\rangle \\
    & \nonumber \quad + 2\bigg\langle  (Q^{0;Y}_s + \bar{Q}^{0;Y}_s)Q_s\bar{\mathcal{X}}_s , \Psi_s\bigg\rangle+ \langle  (Q^{0;Y}_s + \bar{Q}^{0;Y}_s) \Psi_s ,\Psi_s \rangle \\
    & \nonumber \quad +2\langle  q^{0;X}_s +Q_s^\top q^{0;Y}_s+Q_s^T(Q^{0;Y}_s + \bar{Q}^{0;Y}_s) \varphi_s, \bar{\mathcal{X}}_s\rangle + 2\langle  q^{0;Y}_s +(Q^{0;Y}_s + \bar{Q}^{0;Y}_s) \varphi_s , \Psi_s\rangle \\
    & \nonumber \quad + (b(s) - c_{2,x} )\beta^2_s + 2\bigg[ S^{0;Y}_s (Q_s \bar{\mathcal{X}}_s + \Psi_s + \varphi_s)-(A^0-\theta)\bigg] \beta_s+ 2\langle  q^{0;Y}_s,\varphi_s \rangle  \\
    &\nonumber \quad + \langle  (Q^{0;Y}_s + \bar{Q}^{0;Y}_s) \varphi_s ,\varphi_s \rangle + \gamma_5 \bigg\}ds  + {\langle } (H^0 +H^1)\bar{\mathcal{X}}_T , \bar{\mathcal{X}}_T\rangle + \langle  2B_Th^1 + h^0, \bar{\mathcal{X}}_T\rangle + B^2_T\Bigg]\\
    & \nonumber =: \mathbb{E}\Bigg[ \int_0^T \bigg\{
    \bigg\langle  Q^{\mathcal{X}}_s\bar{\mathcal{X}}_s, \bar{\mathcal{X}}_s \bigg\rangle  + 2\bigg\langle  Q^{\mathcal{X }\Psi}_s\bar{\mathcal{X}}_s , \Psi_s\bigg\rangle+ \langle  Q^{\Psi}_s \Psi_s ,\Psi_s \rangle +2\langle  q^{\mathcal X}_s, \bar{\mathcal{X}}_s\rangle + 2\langle  q^{\Psi}_s, \Psi_s\rangle \\
    & \nonumber \quad + Q^\beta_s \beta^2_s + \bigg[ q^{\beta \mathcal{X}}_s \bar{\mathcal{X}}_s + q^{\beta \Psi}_s\Psi_s + q^{\beta}_s \bigg] \beta_s \bigg\}ds + {\langle } H \bar{\mathcal{X}}_T , \bar{\mathcal{X}}_T\rangle + \langle  2B_Th^1 + h^0, \bar{\mathcal{X}}_T\rangle + B^2_T\Bigg]  
    \\
    &\nonumber \quad  + \mathbb{E}\Bigg[ \int_0^T \bigg\{
    2\langle  q^{0;Y}_s,\varphi_s \rangle + \langle  (Q^{0;Y}_s + \bar{Q}^{0;Y}_s) \varphi_s ,\varphi_s \rangle + \gamma_5 \bigg\}ds \Bigg],
\end{align}
where we forget about the second expectation in the rest of the optimisation process as it is independent of states and controls.

We aim at solving the optimal control problem above. In order to apply the stochastic maximum principle, we introduce the adjoint process $(P^\mathcal{X}, P^B, R)$. Since the dynamics of the state variables is described by a system of FBSDEs, the corresponding/associated Hamiltonian has the following form 
\begin{align}
    &{H(t,\bar{\mathcal{X}}_t,B_t, \Psi_t, Z^{\Psi}_t,P^{\mathcal{X}}_t,P^B_t, R_t, Z^{\mathcal{X}}_t, Z^B_t, \beta_t)} \\ \nonumber
    \qquad & := \langle P^{\mathcal{X}}_t, A^{\mathcal{X}}_t \bar{\mathcal{X}}_t + A^{\Psi}_t \Psi_t + A^{\beta}_t \beta_t +a^{\mathcal{X}}_t\rangle + \langle P^B_t, \beta_t\rangle + \langle Z^{\mathcal{X}}_t, c_0\rangle + \langle R_t, B^\Psi_t \Psi_t +B^\beta_t \beta_t\rangle  \\ \nonumber
    &\quad + \langle Q^{\mathcal{X}}_t \bar{\mathcal{X}}_t, \bar{\mathcal{X}}_t\rangle+ 2 \langle Q^{\mathcal{X}\Psi}_t\bar{\mathcal{X}}_t, \Psi_t\rangle + \langle Q^{\Psi}_t\Psi_t, \Psi_t\rangle + 2\langle q^{\mathcal{X}}_t,\bar{\mathcal{X}}_t \rangle + 2\langle q^{\Psi}_t,\Psi_t \rangle + Q^\beta_t \beta_t^2\\ \nonumber
    &\quad + \bigg[ q^{\beta \mathcal{X}}_t \bar{\mathcal{X}}_t + q^{\beta \Psi}_t\Psi_t + q^{\beta}_t \bigg] \beta_t.
\end{align}
Thus, computing its partial derivatives we get
\begin{equation*}
\begin{cases}
    H_{\mathcal X} 
    & = (A^{\mathcal{X}}_t)^\top  P^\mathcal{X}_t + 2Q^{\mathcal{X}}_t \bar\X_t + 2(Q^{\X \Psi}_t)^\top \Psi_t +2q^\X_t + (q^{\beta \X}_t)^\top  \beta_t \\
    H_B
    & =0,\\
    H_\Psi 
    & = (A^{\Psi}_t)^\top  P^\mathcal{X}_t + (B^{\Psi}_t)^\top  R_t + 2Q^{\mathcal{X}\Psi}_t \bar\X_t + 2(Q^{ \Psi}_t)^\top\Psi_t +2q^\Psi_t + (q^{\beta \Psi}_t)^\top  \beta_t \\
    H_{Z^\Psi} 
    &= 0,\\
    H_{Z^\mathcal{X}} &= c_0,\\
    H_{Z^B} 
    &= 0,
\end{cases}
\end{equation*}
so that the dynamics for the adjoint processes read as
\begin{align*}
\begin{cases}
&dR_t = -\left((A^{\Psi}_t)^\top  P^\mathcal{X}_t + (B^{\Psi}_t)^\top  R_t + 2Q^{\mathcal{X}\Psi}_t \bar\X_t + 2(Q^{ \Psi}_t)^\top \Psi_t +2q^\Psi_t + (q^{\beta \Psi}_t)^\top  \beta_t\right) dt + 0 dW^0_t, \\
& R_0 = 0;\\
&dP^\mathcal{X}_t = -\left((A^{\mathcal{X}}_t)^\top  P^\mathcal{X}_t + 2Q^{\mathcal{X}}_t \bar\X_t + 2(Q^{\X \Psi}_t)^\top \Psi_t +2q^\X_t + (q^{\beta \X}_t)^\top  \beta_t\right) dt + Z^\mathcal{X}_t dW^0_t, \\
&P^\mathcal{X}_T = 2H \bar{\mathcal{X}}_T + 2B_T h^1 + h_0,\\
& dP^B_t = Z^B_t dW^0_t,\\
&P^B_T = 2B_T + 2(h^1)^\top\bar{\mathcal{X}}_T.
\end{cases}
\end{align*}

Now, if we differentiate w.r.t. $\beta$ we obtain the following 
\begin{align*}
  H_\beta = (A^\beta_t)^\top P^\mathcal{X}_t +P^B_t+ (B^\beta_t)^\top R_t + 2Q^\beta_t \beta_t + q^{\beta \X}_t \bar \X_t+ q^{\beta \Psi}_t \Psi_t + q^{\beta}_t
\end{align*}
and so by the FOC the candidate optimal control should have the following form:
\begin{align}
    \hat\beta_t
    = -\frac{1}{2Q^\beta_t} \left((A^\beta_t)^\top P^\mathcal{X}_t + P^B_t+ (B^\beta_t)^\top R_t  + q^{\beta \X}_t \bar \X_t+ q^{\beta \Psi}_t \Psi_t + q^{\beta}_t\right)
\end{align}

We introduce now the following notation, under which the coefficients of the FBSDEs can be handled more efficiently: 
\begin{align*}
\tilde{O}&:= A_{\mathcal{X}}= A_1 
= \begin{pmatrix}
    -\delta & 0\\
    0 & 0
\end{pmatrix}\\
\tilde{M}& := A_\mathcal{Y}+\overline{A}_\mathcal{Y} = -\frac{1}{2} A_2 Q_3^{-1} A_2^\top  - A_2 Q_3^{-1} Q_6 
= \frac{1}{2} \begin{pmatrix} 
\frac{(a^f)^2}{c_{2,f}}+\frac{(a^g)^2}{c_{2,g}}		&-\frac{a^e a^f}{c_{2,f}}\\
-\frac{a^e a^f}{c_{2,f}}	&\frac{(a^e)^2}{c_{2,f}} +\frac{1}{c_{2,e}}	
\end{pmatrix}
\\
A_\beta &= -A_2 Q_3^{-1} Q_5 
= \begin{pmatrix} 
0 \\
- 1	
\end{pmatrix}
\\
    a_\mathcal{X} &= -A_2 Q_3^{-1}q_3
    = -\frac{1}{2}\begin{pmatrix} 
\frac{c_{1,f}}{c_{2,f}}a^{f} + \frac{c_{1,g}}{c_{2,g}}a^{g} \\
-\frac{c_{1,f}}{c_{2,f}}a^{e} + \frac{c_{1,e}}{c_{2,e}}
\end{pmatrix}
\\
\tilde{N} 
& := \begin{pmatrix}
    0 & 0 \\
    0 & \frac{1}{2 (b(t)-c_{2,x})}
\end{pmatrix}
\\
 \tilde{T}&:= Q^{0;Y}_t + \overline{Q}^{0;Y}_t= a(t) \begin{pmatrix}
\gamma_2+\gamma_3&\frac{1}{2} \gamma_4 \\ \frac{1}{2}\gamma_4 &\gamma_1\end{pmatrix}
\\
\tilde{U}&:= Q^{0;X}_t + \overline{Q}^{0;X}_t
= a(t) \begin{pmatrix}
bA^2&0\\
0&0
\end{pmatrix}
\\
B_y &= -A_1^\top  = - \widetilde{O}
\\
\widetilde{F} 
& :=
\begin{pmatrix}
    0\\
    1
\end{pmatrix}
\end{align*}
Hence, the new system of FBSDEs has the following form:
\begin{align*}
    d\bar \X_t 
    & = \left(A^{\mathcal{X}}_t \bar{\mathcal{X}}_t + A^{\Psi}_t\Psi_t -\frac{1}{2Q^\beta_t} A^\beta_t \left((A^\beta_t)^\top P^\mathcal{X}_t + P^B_t + (B^\beta_t)^\top R_t  + q^{\beta \X}_t \bar \X_t+ q^{\beta \Psi}_t \Psi_t + q^{\beta}_t\right) + a^\mathcal{X}_t\right) dt \\
    &\qquad+ c_0 dW^0_t\\
    & = 
    \bigg\{[\tilde{O} + (\tilde{M} + a(t)\tilde{N}) Q_t] \bar{\mathcal{X}}_t + (\tilde{M} + a(t) \tilde{N})\Psi_t + \tilde{N}Q_t R_t - \tilde{N}P^\mathcal{X}_t +\tilde{N}\tilde{F} P^B_t \\
    &\qquad + \bigg[ (\tilde{M} + a(t) \tilde{N})\varphi_t + a_\mathcal{X} - 2 \tilde{N}\widetilde{F}(A^0 - \theta)\bigg]\bigg\} dt+ c_0 dW^0_t\\
    \bar \X_0
    & = \begin{pmatrix}
        K_0&
        X_0
    \end{pmatrix}^\top\\
     dB_t 
     &=  -\frac{1}{2Q^\beta_t} \left((A^\beta_t)^\top P^\mathcal{X}_t + P^B_t+ (B^\beta_t)^\top R_t  + q^{\beta \X}_t \bar \X_t+ q^{\beta \Psi}_t \Psi_t + q^{\beta}_t\right)dt\\
    & = \big\{\tilde{F}^T \tilde{N} P^\mathcal{X}_t -\frac{1}{2(b(t)-c_{2,x})} P^B_t - \tilde{F}^T \tilde{N} Q_t R_t - a(t)\tilde{F}^T \tilde{N} Q_t \bar{\mathcal{X}}_t - a(t)\tilde{F}^T \tilde{N}\Psi_t \\
    &\qquad - a(t) \tilde{F}^T \tilde{N}\varphi_t + \frac{A^0-\theta}{(b(t)-c_{2,x})}\big\} dt\\
    B_0&=0\\
    dR_t 
    & = \Bigg[-(A^{\Psi}_t)^\top P^\mathcal{X}_t - (B^{\Psi}_t)^\top R_t - 2Q^{\mathcal{X}\Psi}_t \bar\X_t - 2(Q^{ \Psi}_t)^\top\Psi_t -2q^\Psi_t  \\
    & \qquad +\frac{1}{2Q^\beta_t} (q^{\beta \Psi}_t)^\top \left((A^\beta_t)^\top P^\mathcal{X}_t + P^B_t+ (B^\beta_t)^\top R_t  + q^{\beta \X}_t \bar \X_t+ q^{\beta \Psi}_t \Psi_t + q^{\beta}_t\right) \Bigg] dt, \\
    &= \bigg\{( - 2\tilde{T} +a(t)^2 \tilde{N})Q_t \bar{\mathcal{X}}_t + [-2\tilde{T} + \tilde{N}a(t)^2] \Psi_t + [\tilde{O} + (\tilde{M} + a(t) \tilde{N})Q_t] R_t\\
    &\qquad -(\widetilde{M} + \widetilde{N}a(t))P^\mathcal{X}_t - a(t)\tilde{N}\tilde{F} P^B_t+ \bigg[(- 2\tilde{T}  + a(t)^2\tilde{N})\varphi_t- a(t) \tilde{N}\widetilde{F}[2(A^0-\theta)]\bigg]\bigg\} dt,\\
    R_0 & = (0 \quad 0)^\top, \\
    d\Psi_t 
    & = \left[B^{\Psi}_t\Psi_t  -\frac{1}{2Q^\beta_t} B^{\beta}_t \left((A^\beta_t)^\top  P^\mathcal{X}_t + P^B_t + (B^\beta_t)^\top  R_t  + q^{\beta \X}_t \bar \X_t+ q^{\beta \Psi}_t \Psi_t + q^{\beta}_t\right)  \right] dt +  Z^\Psi_t dW^0_t\\
    & = \bigg\{ - a(t) Q_t \widetilde{N} Q_t \bar{\mathcal{X}}_t - ( \widetilde{O}+Q_t ( \widetilde{M} + a(t) \widetilde{N}) )\Psi_t -  Q_t \widetilde{N} Q_tR_t + Q_t \widetilde{N}  P^\mathcal{X}_t \\
    &\qquad- a(t)\tilde{N}\tilde{F} P^B_t + Q_t \widetilde{N}\left[- a(t) \varphi_t + \widetilde{F} \right] - Q_t \widetilde{N}\left[ a(t) \varphi_t + 2(A^0-\theta)\widetilde{F} \right]\bigg\} dt + Z^\Psi_t dW^0_t,\\
    \Psi_T & = (0 \quad 0)^\top,\\
    dP^\mathcal{X}_t 
    & = \bigg\{[ -2 \widetilde{U} +Q_t [-2\widetilde{T}+a(t)^2\widetilde{N}] Q_t] \bar{\mathcal{X}}_t +Q_t[-2\widetilde{T}+a(t)^2\widetilde{N}]\Psi_t - a(t)Q_t \widetilde{N} Q_t R_t\\
    &\qquad- [\widetilde{O}+Q_t(\widetilde{M}+ \widetilde{N}a(t))] P^\mathcal{X}_t  + a(t)Q_t\tilde{N}\tilde{F}P^B_t+ Q_t\left\{\left[a(t)^2 \widetilde{N}-2 \widetilde{T}\right]\varphi_t -  \widetilde{N}a(t)\widetilde{F} \right\}\\
    & \qquad - Q_t\left\{\left[a(t)^2 \widetilde{N}+2 \widetilde{T}\right]\varphi_t -  2(A^0-\theta)a(t)\widetilde{N}\widetilde{F} \right\}+ a(t)\begin{pmatrix}
        aA\\0
    \end{pmatrix}\bigg\} dt  + Z^P_t dW^0_t,\\
    P^\mathcal{X}_T & = 2H^\top \bar \X_T + 2B_Th^1+ h_0,\\
    P^B_t &= Z^B_T dW^0_t,\\
    P^B_T &= 2B_T + 2 (h^1)^\top \bar{\mathcal{X}}_T. 
\end{align*}
To simplify the next computations we adopt the following notation: $\mathfrak{X}_t := (\bar{\mathcal{X}}_t, B_t, R_t)^\top$ and $\mathfrak{Y}_t:= (\Psi_t, P^\mathcal{X}_t, P^B_t)^\top$, whose dynamics can be defined as:
\begin{equation*}
    \begin{cases}
        d\mathfrak{X}_t = [\mathfrak{A}^{\mathfrak{X}}_t\mathfrak{X}_t + \mathfrak{A}^{\mathfrak{Y}}_t\mathfrak{Y}_t + \mathfrak{a}_t ] dt + \mathfrak{s} dW^0_t,\\
        d\mathfrak{Y}_t = [\mathfrak{B}^{\mathfrak{X}}_t\mathfrak{X}_t + \mathfrak{B}^{\mathfrak{Y}}_t\mathfrak{Y}_t + \mathfrak{b}_t ] dt + \mathfrak{Z}^0_tdW^0_t,\\
    \end{cases}
\end{equation*}
The coefficients of such FBSDE system are defined as follows:
\begin{align*}
    \mathfrak{A}^{\mathfrak{X}}_t   &= 
    \begin{pmatrix}
        [\tilde{O} + (\tilde{M} + a(t)\tilde{N}) Q_t]   & 0 & \tilde{N}Q_t\\
        - a(t)\tilde{F}^\top\tilde{N} Q_t & 0 & - \tilde{F}^\top\tilde{N} Q_t \\ 
        (- 2\tilde{T} +a(t)^2 \tilde{N})Q_t  &0&[\tilde{O} + (\tilde{M} + a(t) \tilde{N})Q_t] 
    \end{pmatrix}\\
    \mathfrak{A}^{\mathfrak{Y}}_t   &=
    \begin{pmatrix}
        (\tilde{M} + a(t) \tilde{N})  & -\tilde{N} & \tilde{N}\tilde{F}\\
       - a(t)\tilde{F}^\top\tilde{N} & \tilde{F}^\top\tilde{N} & -\tilde{N}_{2,2}\\
        [-2\tilde{T} + \tilde{N}a(t)^2] &-(\widetilde{M} + \widetilde{N}a(t)) & -a(t)\tilde{N}\tilde{F}
    \end{pmatrix}\\
    \mathfrak{a}_t   &=
    \begin{pmatrix}
        (\tilde{M} + a(t) \tilde{N})\varphi_t + a_\mathcal{X}  - 2(A^0-\theta)\tilde{N}\tilde{F}\\
        - a(t) \tilde{F}^\top\tilde{N} \varphi_t +2(A^0-\theta)\tilde{N}_{2,2}\\
        (- 2\tilde{T} + a(t)^2\tilde{N})\varphi_t- 2(A^0-\theta)a(t) \tilde{N}\tilde{F}
    \end{pmatrix}\\
    \mathfrak{B}^{\mathfrak{X}}_t   &=
    \begin{pmatrix}
        - a(t) Q_t \widetilde{N} Q_t    & 0 & -  Q_t \widetilde{N} Q_t\\
     -2 \widetilde{U} +Q_t [-2\widetilde{T}+a(t)^2\widetilde{N}] Q_t  & 0 & - a(t)Q_t \widetilde{N} Q_t\\
     0 &0&0
    \end{pmatrix}\\
    \mathfrak{B}^{\mathfrak{Y}}_t   &=
    \begin{pmatrix}
      - [Q_t ( \widetilde{M} + a(t) \widetilde{N}) +\widetilde{O}] &  Q_t \widetilde{N} &- a(t)\tilde{N}\tilde{F}  \\
         Q_t[-2\widetilde{T}+a(t)^2\widetilde{N}] & - [\widetilde{O}+Q_t(\widetilde{M}+ \widetilde{N}a(t))] & a(t)Q_t\tilde{N}\tilde{F}\\
         0 & 0 & 0 
    \end{pmatrix}\\
    \mathfrak{b}_t   &=
    \begin{pmatrix}
        - Q_t \widetilde{N}\left[a(t) \varphi_t + 2(A^0-\theta) \widetilde{F} \right]\\
        - Q_t\left\{\left[a(t)^2 \widetilde{N} + 2 \widetilde{T}\right]\varphi_t - 2(A^0-\theta) a(t) \widetilde{N} \widetilde{F} \right\} + a(t)
        \begin{pmatrix}
            aA\\0
        \end{pmatrix}\\
        0
    \end{pmatrix}
\end{align*}
Let us also recall the initial and terminal conditions:
\begin{align*}
    \mathfrak{X}_0 &:= (\bar{\mathcal{X}}_0, B_0, R_0)^\top = (X_0, K_0, 0, 0, 0)^\top,\\
    \mathfrak{Y}_T &:= (\Psi_T, P^\mathcal{X}_T, P^B_T)^\top, 
\end{align*}
where
\begin{align*}
    \Psi_T &= (0, 0)^\top,\\ 
    P^\mathcal{X}_T &= 2H^\top \bar{\mathcal{X}}_T + 2B_T h^1 + h^0,\\
    P^B_T &= 2B_T + 2 (h^1)^\top \bar{\mathcal{X}}_T,
\end{align*}
or equivalently in matrix form,
\begin{equation*}
    \mathfrak{Y}_T = 2\begin{pmatrix}
        0 & 0 & 0\\
        H^\top & h^1 & 0\\ 
        (h^1)^\top & 1 & 0
    \end{pmatrix}
    \mathfrak{X}_T
    +
    \begin{pmatrix}
        0  \\
        h^0\\
        0
    \end{pmatrix}.
\end{equation*}
We apply a Riccati approach, assuming that: 
\begin{equation}
\label{eqn: ansatz mathfrak}
\mathfrak{Y}_t := \mathfrak{Q}_t\mathfrak{X}_t + \mathfrak{q}_t,
\end{equation}
for two suitable deterministic functions $\mathfrak{Q}$ and $\mathfrak{q}$. Apply It\^o formula to this claim. 
\begin{align*}
    d\mathfrak{Y}_t 
    &= \mathfrak{Q}'_t \mathfrak{X}_tdt + \mathfrak{Q}_t \{[\mathfrak{A}^{\mathfrak{X}}_t\mathfrak{X}_t + \mathfrak{A}^{\mathfrak{Y}}_t\mathfrak{Y}_t + \mathfrak{a}_t ] dt + \mathfrak{s} dW^0_t\} + \mathfrak{q}'_tdt\\
    &= \{[ \mathfrak{Q}'_t+ \mathfrak{Q}_t \mathfrak{A}^{\mathfrak{X}}_t + \mathfrak{Q}_t\mathfrak{A}^{\mathfrak{Y}}_t \mathfrak{Q}_t]\mathfrak{X}_t + \mathfrak{Q}_t \mathfrak{A}^{\mathfrak{Y}}_t\mathfrak{q}_t + \mathfrak{Q}_t\mathfrak{a}_t + \mathfrak{q}'_t\} dt + \mathfrak{s} dW^0_t,
\end{align*}
while, by definition of $\mathfrak{Y}$, we have 
\begin{align*}
d\mathfrak{Y}_t &=[\mathfrak{B}^{\mathfrak{X}}_t\mathfrak{X}_t + \mathfrak{B}^{\mathfrak{Y}}_t\mathfrak{Y}_t + \mathfrak{b}_t ] dt + \mathfrak{Z}^0_tdW^0_t\\
&= [\mathfrak{B}^{\mathfrak{X}}_t\mathfrak{X}_t + \mathfrak{B}^{\mathfrak{Y}}_t(\mathfrak{Q}_t \mathfrak{X}_t + \mathfrak{q}_t) + \mathfrak{b}_t ] dt + \mathfrak{Z}^0_tdW^0_t\\
&= [(\mathfrak{B}^{\mathfrak{X}}_t + \mathfrak{B}^{\mathfrak{Y}}\mathfrak{Q}_t) \mathfrak{X}_t + \mathfrak{B}^{\mathfrak{Y}}_t \mathfrak{q}_t + \mathfrak{b}_t ] dt + \mathfrak{Z}^0_t dW^0_t
\end{align*}
Comparing the terms we obtain the two ODEs:
\begin{align*}
    \mathfrak{Q}'_t &= -(\mathfrak{Q}_t \mathfrak{A}^{\mathfrak{X}}_t + \mathfrak{Q}_t \mathfrak{A}^{\mathfrak{Y}}_t \mathfrak{Q}_t) + \mathfrak{B}^{\mathfrak{X}}_t +\mathfrak{B}^{\mathfrak{Y}}\mathfrak{Q}_t\\
    \mathfrak{q}'_t &= - ( \mathfrak{Q}_t \mathfrak{a}_t + \mathfrak{Q}_t \mathfrak{A}^{\mathfrak{Y}}_t \mathfrak{q}_t ) + \mathfrak{B}^{\mathfrak{Y}}_t \mathfrak{q}_t +\mathfrak{b}_t.
\end{align*}
with terminal conditions given by
\begin{equation*}
\mathfrak{Q}_T
= 2\begin{pmatrix}
    0 & 0 & 0\\
    H^\top & h^1 & 0\\ 
    (h^1)^\top & 1 &0
\end{pmatrix}, \quad \mathfrak{q}_T 
= \begin{pmatrix}
    0  \\
    h^0\\
    0 \\
\end{pmatrix}.
\end{equation*}

\section{Numerical Experiments}\label{section: numerical_experiments}

This section is devoted to displaying of the power of our results via some numerical illustrations. 
In particular, we compare the equilibrium outcomes of the economy under the regulator's optimal permit-issuance policy with the laissez-faire benchmark, i.e.\ the economy without any carbon-reduction policy (hereafter \emph{no-policy economy}). 
Throughout, we fix a single stochastic scenario (common seed) so that all differences between the two economies are attributable solely to the regulatory intervention and not to sampling variability. The model parameters are summarised in Table~\ref{tab:params}. 

{\footnotesize{
\begin{table}[htb!]
\centering
\caption{\footnotesize{Model parameters used in the numerical experiments.}}
\label{tab:params}
\begin{tabular}{llll}
\toprule
Parameter & Value & Parameter & Value \\
\midrule
$T$            & $5$               & $\sigma_K$    & $0.05$                        \\
$K_0$          & $30$              & $\sigma_E$    & $0.02$                        \\
$E_0$          & $6$               & $\rho$        & $0.92$                        \\
$A_0$          & $10$              & $a$           & $50$                          \\
$a^f$          & $7$               & $b$           & $0.03$                        \\
$a^g$          & $3.5$             & $A$           & $1.7$                         \\
$a^e$          & $2$               & $\gamma$      & $0.99$                        \\
$\delta$       & $0.01$            & $\lambda$     & $9.5\times10^{-3}$            \\
$c_{1,f}$      & $0.01$            & $c_{2,f}$     & $3$                           \\
$c_{1,g}$      & $0.02$            & $c_{2,g}$     & $4$                           \\
$c_{1,e}$      & $80$              & $c_{2,e}$     & $(2\times 0.211)^{-1}$        \\
$c_{2,x}$      & $(2\times285.713)^{-1}$ & $a(t)$ & $1/40$                   \\
$b(t)$         & $1 + e^{t/2}$     & ratio         & $0.5$                         \\
\bottomrule
\end{tabular}
\end{table}
}}

For their choice, when possible, we refer to~\cite{aid2023optimal} and~\cite{del2024mean}. The time horizon is
$T = 5$ years, broadly consistent with a single phase of the EU~ETS. Capital
parameters are chosen so that the no-policy economy reaches a realistic steady state,
while emission parameters are calibrated to reflect the high correlation of sectoral
emissions with common macroeconomic shocks ($\rho = 0.92$), as documented in the
empirical literature \cite{aid2023optimal}. The regulator targets a $50\%$ reduction of
cumulative emissions relative to the no-policy benchmark, i.e.\ $\theta = 0.5\,
\bar{E}^{\mathrm{WP}}_T$, with penalty parameter $\lambda = 9.5 \times 10^{-3}$ and
weight function $a(t) = 1/40$.

\subsection*{Emission reduction and capital preservation}
\label{subsec:emissions}

{\footnotesize{\begin{table}[htb!]
\centering
\caption{\footnotesize{Key outcomes under the optimal regulatory policy vs the no-policy benchmark.}}
\label{tab:results}
\begin{tabular}{lcc}
\toprule
Quantity & No-policy economy & With optimal policy \\
\midrule
Cumulative emissions $E_T$      & $327.6$  & $\approx 163.8$ \\
Emission ratio $E_T / E^{WP}_T$ & $1.000$  & $\approx 0.500$ \\
Capital $K_T$                   & $928.6$  & $\approx 847.2$ \\
Capital ratio $K_T / K^{WP}_T$  & $1.000$  & $\approx 0.913$ \\
\bottomrule
\end{tabular}
\end{table}}}

\begin{figure}[htb!]
    \centering
    \includegraphics[width=0.48\textwidth]{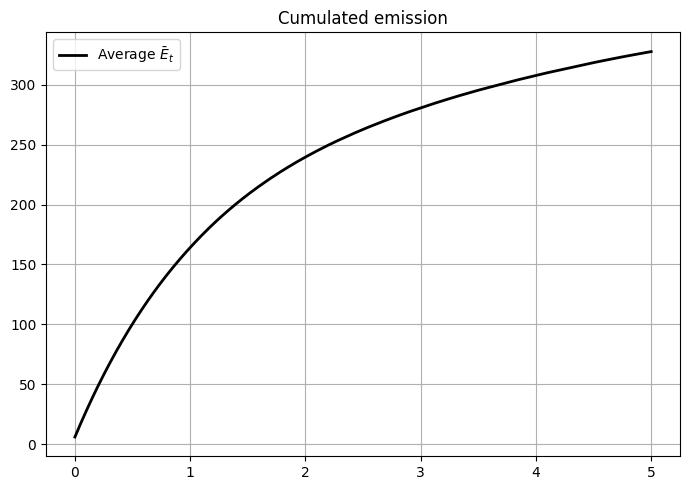}
    \hfill
    \includegraphics[width=0.48\textwidth]{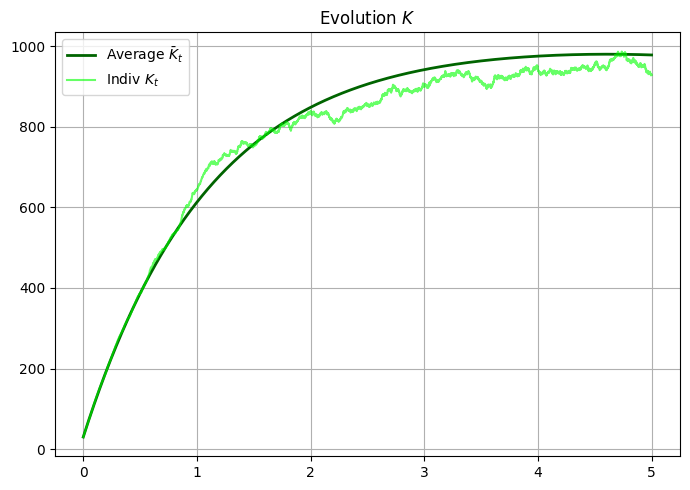}
    \hfill
    \includegraphics[width=0.48\textwidth]{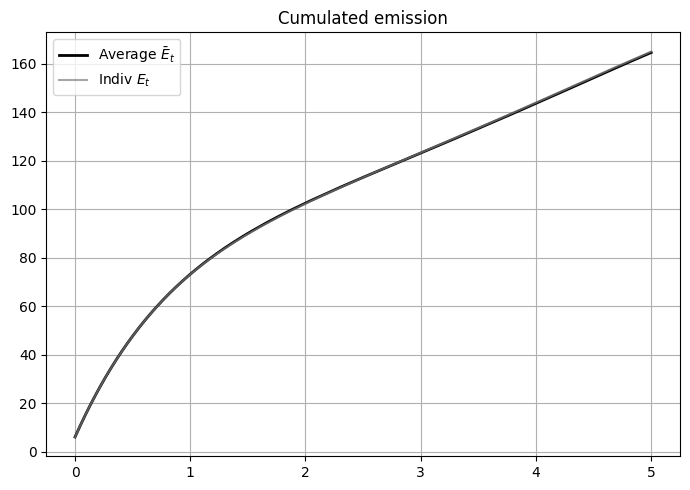}
    \hfill
    \includegraphics[width=0.48\textwidth]{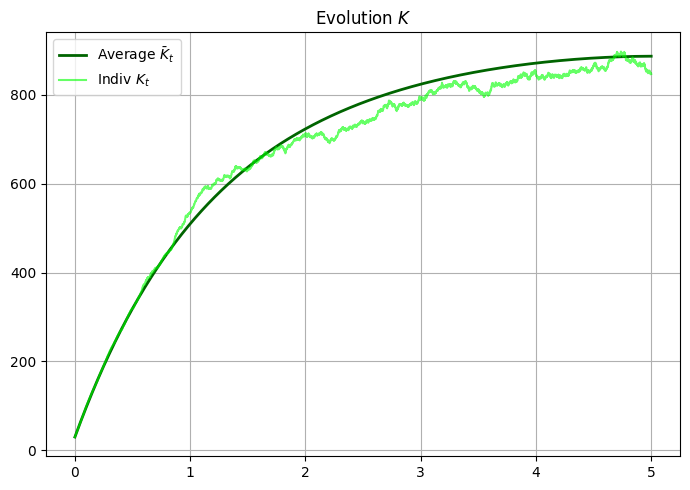}
    \caption{Left: cumulative emissions $E_t$ under the optimal policy (top row) and the
    no-policy economy (bottom row). Right: capital $K_t$ and its mean-field average
    $\bar{K}_t$ under the two regimes. All trajectories share the same Brownian
    realisation.}
    \label{fig:emissions}
\end{figure}

The first key result concerns the effectiveness of the optimal policy in achieving the emission target while preserving the productive capacity of the economy.
Table~\ref{tab:results} summarises the main quantitative outcomes of the experiment, while Figure~\ref{fig:emissions} reports the trajectories of cumulative emissions $E_t$ (left-hand side) and capital $K_t$ (right-hand side) for both economies.
Under the optimal regulatory policy, cumulative emissions at $T=5$ amount to approximately $50\%$ of those recorded in the no-policy economy, confirming that the regulator successfully steers the economy to the prescribed target $\theta$. 
At the same time, the capital stock at $T$ is reduced by only approximately $9\%$ relative to
the no-policy benchmark. 
This result highlights a key feature of the mean-field mechanism: by acting on the equilibrium permit price rather than directly constraining
individual firms, the regulator achieves a large environmental gain at a comparatively modest cost in terms of aggregate productive capacity.

\subsection*{Equilibrium permit price and market clearing}
\label{subsec:price}

\begin{figure}[ht]
    \centering
    \includegraphics[width=0.48\textwidth]{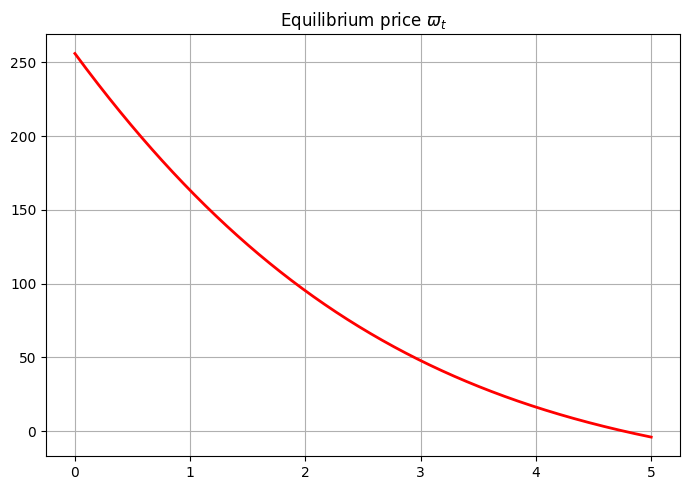}
    \hfill
    \includegraphics[width=0.48\textwidth]{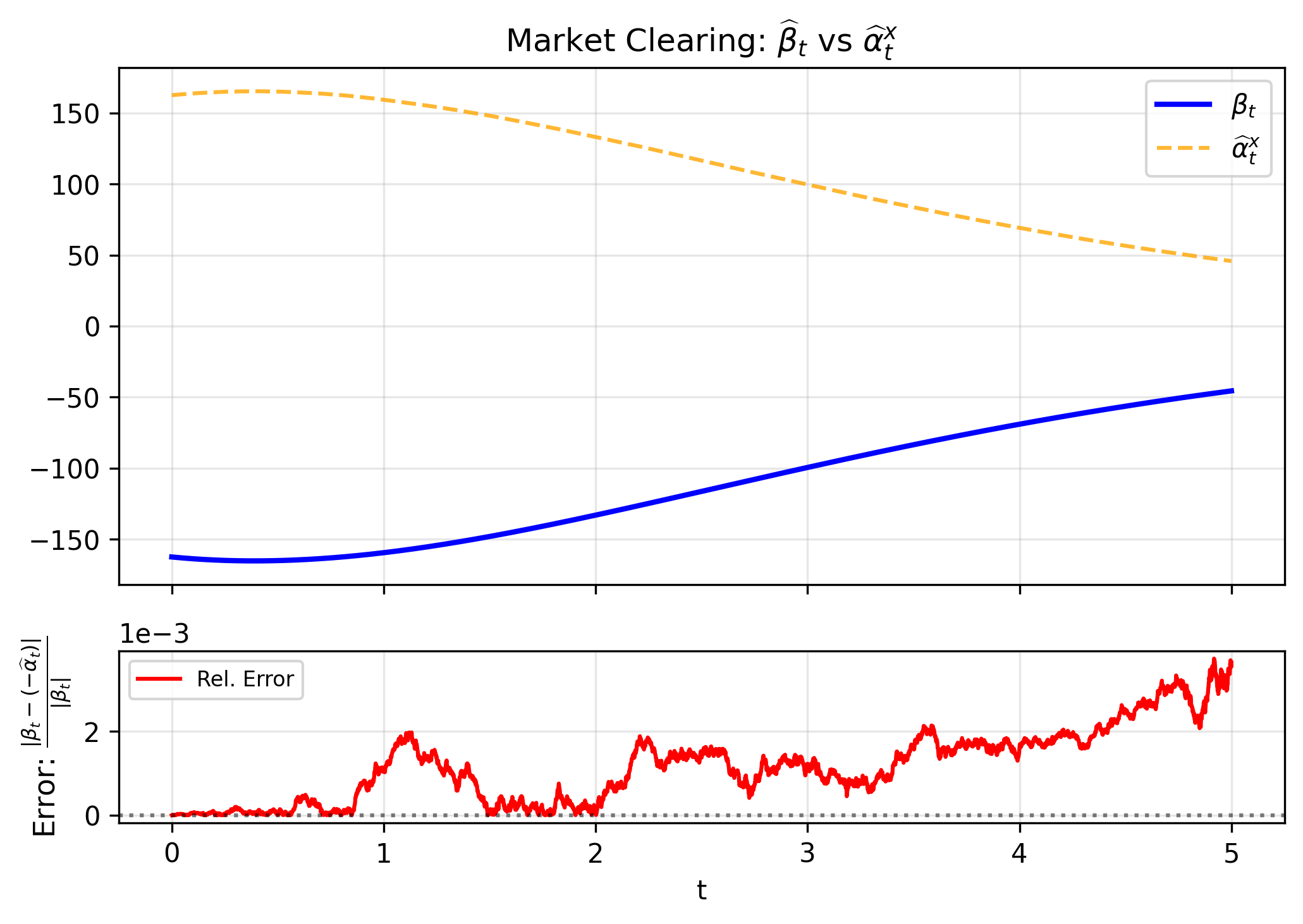}
    \caption{Left: equilibrium permit price $\varpi_t$. Right Top: Comparison between optimal issuance rate $\hat\beta_t$ of the regulator and the ETS trading rate for the representative company multiplied by. Right Down: Relative error between $\hat\beta$ and $-\hat{\alpha}^x$.}
    \label{fig:price_beta}
\end{figure}

Figure~\ref{fig:price_beta} displays the equilibrium permit price $\varpi_t$ and the regulator's optimal issuance rate $\hat\beta_t$.
The permit price starts high and decreases monotonically over the horizon, reflecting the progressive tightening of the cap as cumulative issued permits $B_t$ accumulate.
The issuance rate $\hat\beta_t$ is strictly negative throughout, confirming that the regulator acts as a net supplier of permits at all times, consistently with Remark~\ref{rmk:beta-sign}. Furthermore, we verified numerically that the market-clearing condition $\hat\beta_t \approx\mathbb{E}\bigl[\hat\alpha^x_t \mid \mathcal{F}^0_t\bigr]$, $t\in[0,T]$, is satisfied to numerical precision at each time step, with $ \frac{|\hat\beta_t + \alpha^x_t|}{|\hat\beta_t|} <  0.4\%$ uniformly in $t$.

\subsection*{Energy-mix switching under regulation}
\label{subsec:switch}

\begin{figure}[ht]
    \centering
    \includegraphics[width=0.48\textwidth]{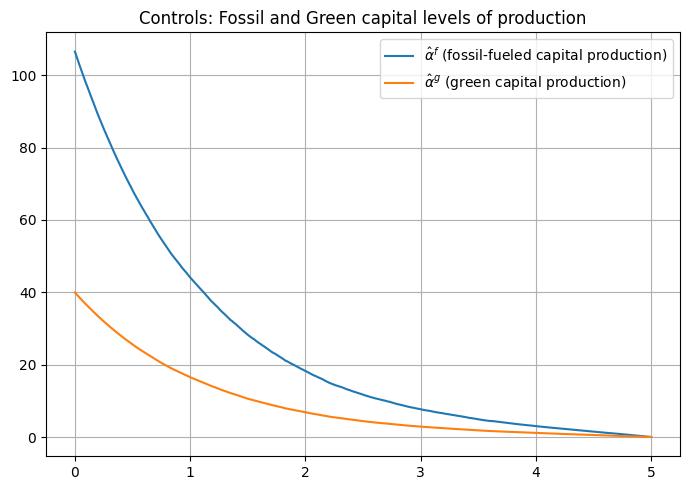}
    \hfill
    \includegraphics[width=0.48\textwidth]{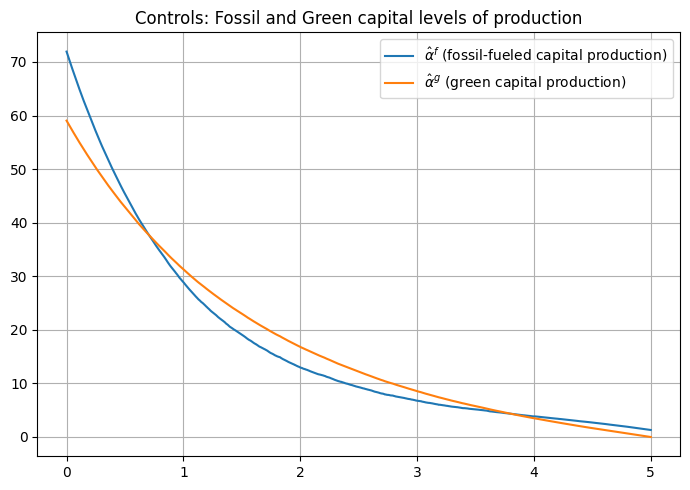}
    \caption{Optimal production controls $\hat\alpha^f_t$ (blue) and $\hat\alpha^g_t$
    (orange) in the no-policy economy (left) and under optimal regulation (right).}
    \label{fig:controls}
\end{figure}

Perhaps the most economically informative finding concerns the firms' production strategy. Figure~\ref{fig:controls} compares the optimal investment rates in fossil-fuel-based capital $\hat\alpha^f_t$ and green capital $\hat\alpha^g_t$ under the two regimes.
In the no-policy economy (left-hand side Figure~\ref{fig:controls}), firms consistently
invest more in fossil-fuel-based capital than in green capital, i.e. $\hat\alpha^f_t > \hat\alpha^g_t$ for all $t\in[0,T]$, as fossil-fuel investment is more productive in the short run.
Under the optimal regulatory policy (right panel), the picture changes markedly. The
rising permit price $\varpi_t$ at the beginning of the horizon makes fossil-fuel production increasingly costly, inducing firms to shift investment towards greener technologies. This results in a \emph{crossing} of the two control trajectories at approximately $t \approx 1$: for $t > 1$, green investment $\hat\alpha^g_t$ dominates fossil-fuel investment $\hat\alpha^f_t$. 

To summarise our findings, the results confirm that the optimal mean-field regulatory policy achieves the desired $50\%$ emission reduction while limiting the loss in aggregate capital to approximately $9\%$. Moreover, the policy endogenously induces a structural shift in the energy mix of the representative firm, with green investment overtaking fossil-fuel investment
roughly one year into the regulation period.

\newpage

\newpage
\appendix

\section{The optimal control problem in absence of carbon reduction policy}
\label{app: unregulated market}

In this Section we consider what the economy described in Section \ref{section: introduction} would be in absence of a policy maker who balances the overall profit of the economy with the emissions of the companies that form the market. 
We consider $N$ companies whose behaviour is described within the filtered probability space $(\Omega, \ItF, \prob, (\F^0, \F^K, \F^E))$, where $(\F^0, \F^K, \F^E)$ are respectively the filtrations generated by the scalar Brownian motion $W^0, W^K, W^E$, in analogy to Section \ref{section: introduction}. The dynamics for the involved state variables remains the same as in Section \ref{sect_model_N}. In particular, the production dynamics is still described by \eqref{eqn: K dynamics}, while the emission is given by \eqref{eqn: E dynamics}. 
What changes with respect to the setting we considered earlier is the firms' objective functional: in absence of ETS-based carbon reduction policies, the target function is defined by
\begin{equation*}
    J^{WP}(v^i, K^i, \mathfrak{m}^{(N)}(\underline{K})) := \E \bigg[ \int_0^T - p(K^i, \mathfrak{m}^{(N)}(\underline{K}_t)AK^i_t + \sum_{p\in\{e,f,g\}} C^p(\alpha^{p,i}_t)dt\bigg], 
\end{equation*}
where $p$ is defined in \eqref{eqn: price good economy n} and $C^p$ is defined in \eqref{eqn: quadratic cost functional}. The control of the $ith$ company is defined by the vector
\begin{equation*}
    v^{WP,i}_t := \begin{pmatrix}
        \alpha^{f,i}_t & \alpha^{g,i}_t & \alpha^{e,i}_t
    \end{pmatrix}, \quad t \in [0,T], \quad i=1,\dots, N.
\end{equation*}
\begin{remark}
\label{rmk: on the measurability of wp problem}
We notice that the optimal control problem is independent of $E$, which is no longer a state variable of the problem. We specify the shape of the running cost which is $f^{WP}:\R^5\to\R$
\begin{equation*}
    f^{WP}(v, K ,\bar{K}):= -[a-b(1-\gamma)AK - b\gamma A\bar{K}]AK + \sum_{p \in \{e,f,g\}}C^p(\alpha^p),
\end{equation*}
where $v=(\alpha^f, \alpha^g, \alpha^e)$. We observe that $\alpha^e$ does not affect the solution, because the emission process $E$ does not appear either in the state variable or in the target function. More in general, the controls can be assumed to be independent of the noises $W^0$ and $W^E$, since these stochastic processes affect neither the dynamics of $K$ nor the target function. This affects the assumptions on the adjoint process determined by the stochastic maximum principle, which is then adapted to $W^K$ only.
\end{remark}

Adopting the approach introduced in Section \ref{section: mean field approach}, we focus on the mean-field optimal control problem defined as the limit of the stochastic game introduced above. The state variable of such an optimal control problem is: 
\begin{equation*}
\begin{cases}
dK_t = (a^f\alpha^f_t + a^g\alpha^g_t - \delta K_t)dt + \sigma^K K_t dW^K_t,\\
K^i_0 = K_0.
\end{cases}
\end{equation*}
The target function is $\bar{J}^{WP}(v, K):= J^{WP}(v, K,\bar{K})$, where $\bar{K}_t:= \E[K_t]$ for all $t\in[0,T]$.

The Hamiltonian function of this problem is defined by 
\begin{align*}
    H(v, K, V, Z, \xi) &:= V(a^f\alpha^f + a^g \alpha^g - \delta K) + f^{WP}(K, v, \xi) + \sigma^KK Z\\
    &=V(a^f\alpha^f+a^g\alpha^g - \delta K)  -[a-b(1-\gamma)AK - b\gamma A\xi]AK \\
    &\qquad+ \sum_{p \in \{e,f,g\}}C^p(\alpha^p)+\sigma^KKZ.
\end{align*}
The candidate optimal control has then the following shape
\begin{equation*}
    \begin{cases}
        \hat\alpha^f_t &= -\frac{1}{2c_{2,f}}(V_ta^f + c_{1,f}),\\
        \hat\alpha^g_t &= -\frac{1}{2c_{2,g}}(V_ta^g + c_{1,g}),\\
        \hat\alpha^e_t &= -\frac{c_{1,e}}{2c_{2,e}},
    \end{cases}
\end{equation*}
where $V$ satisfies
\begin{align*}
    dV_t &= -[-\delta V_t + \sigma^KZ_t - (a-b(1-\gamma)AK_t - b\gamma\xi_t)A - b(1-\gamma)A^2K_t]dt + Z_tdW^K_t\\
    &=[\delta V_t + 2b(1-\gamma)A^2K_t - \sigma^KZ_t + (a-b\gamma A\xi_t)A]dt + Z_tdW^K_t
\end{align*}
Since the candidate optimal control is unique, the optimal state variable is given by 
\begin{equation*}
dK_t = -\bigg[ 
\bigg(\frac{(a^f)^2}{2c_{2f}}+\frac{(a^g)^2}{2c_{2g}}\bigg)V_t + \delta K_t + \frac{a^fc_{1f}}{2c_{2f}}+\frac{a^gc_{1g}}{2c_{2g}}\bigg]dt + \sigma^KK_t dW^K_t.
\end{equation*}
We adopt the following notation:
\begin{equation*}
\begin{cases} 
    A^{WP}_K := -\delta, \quad A^{WP}_V := -\bigg(\frac{(a^f)^2}{2c_{2f}}+\frac{(a^g)^2}{2c_{2g}}\bigg), \quad a^{WP} := -\bigg( \frac{a^fc_{1f}}{2c_{2f}}+\frac{a^gc_{1g}}{2c_{2g}}\bigg),\\
    B^{WP}_K := 2b(1-\gamma)A^2,\quad \bar{B}^{WP}_K:= -b\gamma A^2,\quad B^{WP}_V:= \delta, \quad B^{WP}_Z := - \sigma^K, \quad b^{WP} := aA,
\end{cases}
\end{equation*}
so that the system becomes
\begin{equation*}
    \begin{cases}
        dK_t = (A^{WP}_K K_t + A^{WP}_V V_t + a^{WP}) dt + \sigma^KK_t dW^K_t,\quad K_0=K_0\\
        dV_t = (B^{WP}_K K_t + \bar{B}^{WP}_K \bar{K}_t + B^{WP}_V V_t + B^{WP}_Z Z_t + b^{WP}) dt + Z_t dW^K_t, \quad V_T=0\\
        d\bar{K}_t = (A^{WP}_K \bar{K}_t + A^{WP}_V \bar{V}_t + a^{WP}) dt,\quad K_0=K_0\\
        d\bar{V}_t = (B^{WP}_K \bar{K}_t + \bar{B}^{WP}_K \bar{K}_t + B^{WP}_V \bar{V}_t + B^{WP}_Z P^{WP}_t \sigma^K \bar{K}_t + b^{WP}) dt, \quad V_T=0\\
    \end{cases}
\end{equation*}
We introduce the following ansatz:
\begin{equation*}
    V_t = P^{WP}_t (K_t - \bar{K}_t) + Q^{WP}_t \bar{K}_t + \varphi^{WP}_t, \quad t\in[0,T]
\end{equation*}
for suitable deterministic functions $P^{WP}$, $Q^{WP}$, $\varphi^{WP}$. Applying It\^o's formula to $V$ and adopting the notation $\bar{V}_t := \E [V_t ]$ for $t \in[0,T]$, we get
\begin{align*}
    dV_t &= [\dot{P}^{WP}_t (K_t - \bar{K}_t) + \dot{Q}^{WP}_t \bar{K}_t + \dot{\varphi}^{WP}_t ]dt + P^{WP}_t d(K_t - \bar{K}_t) + Q^{WP}_t d\bar{K}_t\\
    &= \{ (\dot{P}^{WP}_t + P^{WP}_t A^{WP}_K + P^{WP}_tA^{WP}_V P^{WP}_t)K_t\\
    &\qquad + (-\dot{P}^{WP}_t + \dot{Q}^{WP}_t - P^{WP}_t A^{WP}_K + Q^{WP}_t A^{WP}_K - P^{WP}_t A^{WP}_V P^{WP}_t + Q^{WP}_tA^{WP}_VQ^{WP}_t) \bar{K}_t\\
    &\qquad +(Q^{WP}_t A^{WP}_V \varphi^{WP}_t + \dot{\varphi}^{WP}_t+ Q^{WP}_t a^{WP})\} dt + P^{WP}_t \sigma^K K_t dW^K_t
\end{align*}
We notice that, comparing the volatility terms:
\begin{equation*}
    Z_t = P^{WP}_t \sigma^KK_t,\quad t\in[0,T].
\end{equation*}
On the other hand, we have that:
\begin{align*}
    dV_t &= (B^{WP}_K K_t + \bar{B}^{WP}_K \bar{K}_t + B^{WP}_V V_t + B^{WP}_Z Z_t + b^{WP}) dt + Z_t dW^K_t\\
    &= [(B^{WP}_K + B^{WP}_VP^{WP}_t + B^{WP}_Z P^{WP}_t \sigma^K)K_t+(\bar{B}^{WP}_K - B^{WP}_V P^{WP}_t + B^{WP}_VQ^{WP}_t) \bar{K}_t\\
    &\qquad+ b^{WP} + B^{WP}_V\varphi^{WP}_t]dt + (P^{WP}_t \sigma^KK_t)  dW^K_t,
\end{align*}
which leads to: 
\begin{equation*}
    \begin{cases}
        \dot{P}^{WP}_t + P^{WP}_t A^{WP}_K + P^{WP}_tA^{WP}_V P^{WP}_t = B^{WP}_K + B^{WP}_VP^{WP}_t + B^{WP}_Z P^{WP}_t \sigma^K\\
        -\dot{P}^{WP}_t + \dot{Q}^{WP}_t - P^{WP}_t A^{WP}_K + Q^{WP}_t A^{WP}_K - P^{WP}_t A^{WP}_V P^{WP}_t + Q^{WP}_tA^{WP}_VQ^{WP}_t\\
        \qquad=\bar{B}^{WP}_K - B^{WP}_V P^{WP}_t + B^{WP}_VQ^{WP}_t\\
        Q^{WP}_t A^{WP}_V \varphi^{WP}_t + \dot{\varphi}^{WP}_t+ Q^{WP}_t a^{WP}= b^{WP}+ B^{WP}_V\varphi^{WP}_t.
    \end{cases}
\end{equation*}
In particular, the system can be rearranged as follows
\begin{equation*}
    \begin{cases}
        \dot{P}^{WP}_t &=  B^{WP}_K + (-A^{WP}_K + B^{WP}_V + B^{WP}_Z \sigma^K) P^{WP}_t -A^{WP}_V (P^{WP}_t)^2\\
        \dot{Q}^{WP}_t &= (\bar{B}^{WP}_K + B^{WP}_K) + B^{WP}_Z \sigma^K P^{WP}_t  + (B^{WP}_V - A^{WP}_K)Q^{WP}_t \\
        &\qquad - A^{WP}_V(Q^{WP}_t)^2\\
        \dot{\varphi}^{WP}_t &= b^{WP} - a^{WP}Q^{WP}_t + (B^{WP}_V - Q^{WP}_t A^{WP}_V) \varphi^{WP}_t
    \end{cases}
\end{equation*}
and the terminal conditions are
\begin{equation*}
    P^{WP}_T = Q^{WP}_T = \varphi^{WP}_T = 0.
\end{equation*}

\section{Short Descriptions of the Implementation}
\label{app: sbs implementation} 
Here we summarise how one can proceed to simulate the model in practice. 
This is displayed with the purpose to show how simple it is to obtain a solution to the regulator's problem  under our modelling assumption.


\paragraph{\textbf{Step 1.}}

We start by solving the following ODEs. Let us notice that the first two equations are matrix Riccati equations.
\begin{align}
    &\label{eqn: P} {P}_t' +{P}_t A_{\mathcal{Y}} {P}_t + {P}_t A_{\mathcal{X}}  + A_{\mathcal{X}} {P}_t  + C_{\mathcal{X}} {P}_t C_{\mathcal{X}} - B_{\mathcal{X}} = 0,
    \\
    & {Q}_t' + Q_t \hat{A}_{\mathcal{Y}} Q_t + Q_t A_{\mathcal{X}} +   A_{\mathcal{X}} Q_t + C_{\mathcal{X}} {P}_t C_{\mathcal{X}} - B_{\mathcal{X}} - \overline{B}_{\mathcal{X}}   = 0,
    \\
    &\label{eqn: odevarphi} {\varphi}_t'  + [Q_t \hat{A}_{\mathcal{Y}} + A_{\mathcal{X}}]\varphi_t + Q_t a_\mathcal{X} - b_\mathcal{Y} =0.
\end{align}
with terminal condition $P_T=Q_T=Q_4$ and $\varphi_T=0$.

The coefficients of these equations are the following.
\begin{alignat*}{3}
    A_{\mathcal{X}} &= \begin{pmatrix}
        -\delta & 0 \\ 0 & 0
    \end{pmatrix}, &\
    A_{\mathcal{Y}} &= -\frac{1}{2}\begin{pmatrix}
        \frac{(a^f)^2}{c_{2,f}} + \frac{(a^g)^2}{c_{2,g}} & -\frac{a^e a^f}{c_{2,f}} \\
        -\frac{a^e a^f}{c_{2,f}} & \frac{(a^e)^2}{c_{2,f}} + \frac{1}{c_{2,e}} + \frac{1}{c_{2,x}}
    \end{pmatrix}, &\
    a_{\mathcal{X}} &= -\frac{1}{2}\begin{pmatrix}
        \frac{a^f c_{1,f}}{c_{2,f}} + \frac{a^g c_{1,g}}{c_{2,g}} \\
        -\frac{a^e c_{1,f}}{c_{2,f}} + \frac{c_{1,e}}{c_{2,e}}
    \end{pmatrix},
     \\[10pt]
    \overline{A}_{\mathcal{Y}} &= \begin{pmatrix}
        0 & 0 \\ 0 & \frac{1}{2c_{2,x}}
    \end{pmatrix}, &\
    \hat{A}_{\mathcal{Y}} &= -\frac{1}{2}\begin{pmatrix}
        \frac{(a^f)^2}{c_{2,f}} + \frac{(a^g)^2}{c_{2,g}} & -\frac{a^e a^f}{c_{2,f}} \\
        -\frac{a^e a^f}{c_{2,f}} & \frac{(a^e)^2}{c_{2,f}} + \frac{1}{c_{2,e}}
    \end{pmatrix},
    &\qquad
    \widetilde{F} &= \begin{pmatrix} 0 \\ 1 \end{pmatrix}, \\[10pt]
    \overline{B}_{\mathcal{X}} &= \begin{pmatrix}
        b\gamma A^2 & 0 \\ 0 & 0
    \end{pmatrix}, &\
    B_{\mathcal{X}} &= \begin{pmatrix}
        2b(1-\gamma)A^2 & 0 \\ 0 & 0
    \end{pmatrix}, &\
    C_{\mathcal{X}} &= \begin{pmatrix}
        \sigma^K & 0 \\ 0 & 0
    \end{pmatrix}, \\[10pt]
    c_0 &= \begin{pmatrix} 0 \\ -\sigma^E\sqrt{1-\rho^2} \end{pmatrix}, &\
    b_{\mathcal{Y}} &= \begin{pmatrix} aA \\ 0 \end{pmatrix}, &\
    Q_4 &= \begin{pmatrix} 0 & 0 \\ 0 & \lambda \end{pmatrix}, \\[10pt]
    \widetilde{N} &= \begin{pmatrix}
        0 & 0 \\ 0 & \frac{1}{2(b(t)-c_{2,x})}
    \end{pmatrix}.
\end{alignat*}


\paragraph{\textbf{Step 2.}}

Exploiting the coefficients above together with the solutions to the system of ODEs defined above we can define:
\begin{align*}
    \mathfrak{A}^{\mathfrak{X}}_t   &= 
    \begin{pmatrix}
        [\tilde{O} + (\tilde{M} + a(t)\tilde{N}) Q_t]   & 0 & \tilde{N}Q_t\\
        - a(t)\tilde{N}\tilde{F}^\top Q_t & 0 & - \tilde{N}\tilde{F}^\top Q_t \\ 
        (- 2\tilde{T} +a(t)^2 \tilde{N})Q_t  &0&[\tilde{O} + (\tilde{M} + a(t) \tilde{N})]Q_t 
    \end{pmatrix}\\
    \mathfrak{A}^{\mathfrak{Y}}_t   &=
    \begin{pmatrix}
        (\tilde{M} + a(t) \tilde{N})  & -\tilde{N} & \tilde{N}\tilde{F}\\
       - a(t)\tilde{N}\tilde{F}^\top & \tilde{N}\tilde{F}^\top & -\tilde{N}_{2,2}\\
        [-2\tilde{T} + \tilde{N}a(t)^2] &-(\widetilde{M} + \widetilde{N}a(t)) & -a(t)\tilde{N}\tilde{F}
    \end{pmatrix}\\
    \mathfrak{a}_t   &=
    \begin{pmatrix}
        (\tilde{M} + a(t) \tilde{N})\varphi_t + a_\mathcal{X}  - 2(A^0-\theta)\tilde{N}\tilde{F}\\
        - a(t) \tilde{N}\tilde{F}^\top \varphi_t +2(A^0-\theta)\tilde{N}_{2,2}\\
        (- 2\tilde{T} + a(t)^2\tilde{N})\varphi_t- 2(A^0-\theta)a(t) \tilde{N}\tilde{F}
    \end{pmatrix}\\
    \mathfrak{B}^{\mathfrak{X}}_t   &=
    \begin{pmatrix}
        - a(t) Q_t \widetilde{N} Q_t    & 0 & -  Q_t \widetilde{N} Q_t\\
     -2 \widetilde{U} +Q_t [-2\widetilde{T}+a(t)^2\widetilde{N}] Q_t \bar{\mathcal{X}}_t & 0 & - a(t)Q_t \widetilde{N} Q_t\\
     0 &0&0
    \end{pmatrix}\\
    \mathfrak{B}^{\mathfrak{Y}}_t   &=
    \begin{pmatrix}
      - [Q_t ( \widetilde{M} + a(t) \widetilde{N}) +\widetilde{O}] &  Q_t \widetilde{N} &- a(t)\tilde{N}\tilde{F}  \\
         Q_t[-2\widetilde{T}+a(t)^2\widetilde{N}] & - [\widetilde{O}+Q_t(\widetilde{M}+ \widetilde{N}a(t))] & a(t)Q_t\tilde{N}\tilde{F}\\
         0 & 0 & 0 
    \end{pmatrix}\\
    \mathfrak{b}_t   &=
    \begin{pmatrix}
        - Q_t \widetilde{N}\left[a(t) \varphi_t + 2(A^0-\theta) \widetilde{F} \right]\\
        - Q_t\left\{\left[a(t)^2 \widetilde{N} + 2 \widetilde{T}\right]\varphi_t - 2(A^0-\theta) a(t) \widetilde{N} \widetilde{F} \right\} + a(t)
        \begin{pmatrix}
            aA\\0
        \end{pmatrix}\\
        0
    \end{pmatrix}
\end{align*}
These are the coefficients appearing the following symmetric Riccati equation:
\begin{align*}
    \mathfrak{Q}'_t &= -(\mathfrak{Q}_t \mathfrak{A}^{\mathfrak{X}}_t + \mathfrak{Q}_t \mathfrak{A}^{\mathfrak{Y}}_t \mathfrak{Q}_t) + \mathfrak{B}^{\mathfrak{X}}_t +\mathfrak{B}^{\mathfrak{Y}}\mathfrak{Q}_t\\
    \mathfrak{q}'_t &= - ( \mathfrak{Q}_t \mathfrak{a}_t + \mathfrak{Q}_t \mathfrak{A}^{\mathfrak{Y}}_t \mathfrak{q}_t ) + \mathfrak{B}^{\mathfrak{Y}}_t \mathfrak{q}_t +\mathfrak{b}_t.
\end{align*}
with terminal conditions given by
\begin{equation*}
\mathfrak{Q}_T
= 2\begin{pmatrix}
    0 & 0 & 0\\
    H^\top & h^1 & 0\\ 
    (h^1)^\top & 1 &0
\end{pmatrix}, \quad \mathfrak{q}_T 
= \begin{pmatrix}
    0  \\
    h^0\\
    0 \\
\end{pmatrix}.
\end{equation*}


\paragraph{\textbf{Step 3.}}
The state variable of the problem is then defined by $\mathfrak{X}=(\bar K, \bar X, R)$, which solves the following SDE:
\begin{equation*}
    \begin{cases}
        d\mathfrak{X}_t = [\mathfrak{A}^{\mathfrak{X}}_t\mathfrak{X}_t + \mathfrak{A}^{\mathfrak{Y}}_t\mathfrak{Y}_t + \mathfrak{a}_t ] dt + \mathfrak{s} dW^0_t,\\
        \mathfrak{X}_0 = ((K^0, X^0), 0, (0, 0)),\\
    \end{cases}
\end{equation*}
where we recall that $\mathfrak{Y}_t =\mathfrak{Q}_t\mathfrak{X}_t + \mathfrak{q}_t$.

The regulator's optimal control can actually be rewritten just in terms of $\mathfrak{X}_t$:
\begin{align*}
    \hat\beta_t 
    := \hat\beta(t;\mathfrak{X}_t)&= -
    \frac{1}{2Q^\beta_t} \bigg( 
    \begin{pmatrix}
        q^{\beta\mathcal{X}}_t & 0 & (B^\beta_t)^\top
    \end{pmatrix}
    \mathfrak{X}_t + 
    \begin{pmatrix}
        q^{\beta\Psi}_t & (A^\beta_t)^\top & 1
    \end{pmatrix}
    \mathfrak{Y}_t + q^\beta_t\bigg)\\
    &= -
    \frac{1}{2Q^\beta_t} \bigg[\bigg( 
    \begin{pmatrix}
        q^{\beta\mathcal{X}}_t & 0 & (B^\beta_t)^\top
    \end{pmatrix}
    + 
    \begin{pmatrix}
        q^{\beta\Psi}_t & (A^\beta_t)^\top & 1
    \end{pmatrix}
    \mathfrak{Q}_t\bigg)\mathfrak{X}_t \\
    &\qquad +
    \begin{pmatrix}
        q^{\beta\Psi}_t & (A^\beta_t)^\top & 1
    \end{pmatrix}
    \mathfrak{q}_t+ q^\beta_t\bigg].
\end{align*}
The equilibrium price, defined in \eqref{eqn: mf eq price}, can be rewritten in term of $\mathfrak{X}$ using the two ansatz \eqref{eq:ANSATZ_Y} and \eqref{eqn: ansatz mathfrak}. To do so, we define 
\begin{equation*}
    \hat{e}^2_2 = 
    \begin{pmatrix}
        0\\
        1
    \end{pmatrix},\quad
    \Pi_{1,2} = 
    \begin{pmatrix}
        1& 0& 0& 0& 0\\
        0& 1& 0& 0& 0
    \end{pmatrix}.
\end{equation*}
We have $\mathcal{X}_t = \Pi_{1,2} \mathfrak{X}_t$ and $\Psi_t = \Pi_{1,2} \mathfrak{Y}_t$ for all $t\in[0,T]$:
\begin{align*}
    \w_t 
    &= -\bar{Y}_t + 2c_{2,x}\hat\beta_t,\\
    &= -(\hat{e}^2_2)^\top [(Q_t\Pi_{1,2} + \Pi_{1,2}\mathfrak{Q}_t)\mathfrak{X}_t + \Pi_{1,2}\mathfrak{q}_t + \varphi_t]  + 2c_{2,x}\hat\beta(t, \mathfrak{X}_t).
\end{align*}
Finally, to characterise the strategy adopted by the representative firm after the regulator has set the permit price, we derive the explicit form of the optimal control as a function of the price process. By \eqref{eqn: optimal control}, the optimal control is given by
\begin{align*}
\widehat{v}_t
= -Q_3^{-1} \bigg( q_2(\varpi_t) + \frac{1}{2} A_2^\top \mathcal{Y}_t \bigg),
\end{align*}
where
\begin{align*}
\mathcal{Y}_t &= P_t(\mathcal{X}_t - \bar{\mathcal{X}}_t) + Q_t \bar{\mathcal{X}}_t + \Psi_t + \varphi_t,\\
q_2(\varpi_t) &= \bigg( \tfrac{1}{2} c_{1,f},\; \tfrac{1}{2} c_{1,g},\; \tfrac{1}{2} c_{1,e},\; \tfrac{1}{2} \varpi_t \bigg)^\top .
\end{align*}
Hence, to simulate the optimal controls of the typical agent, we need its state variable, which is given by: 
\begin{align*}
d\mathcal{X}_t &= \Bigg[ A_\mathcal{X} \mathcal{X}_t + A_{\mathcal{Y}} \bigg( P_t (\mathcal{X}_t - \bar{\mathcal{X}}_t) + Q_t \bar{\mathcal{X}}_t + \Psi_t + \varphi_t\bigg) + \bar{A}_{\mathcal{Y}}\bigg( Q_t \bar{\mathcal{X}}_t + \Psi_t + \varphi_t\bigg)  \\
&\qquad+ A_\beta \hat\beta(t;\mathfrak{X}_t) + a_\mathcal{X}\Bigg] dt+ (C_\mathcal{X} \mathcal{X}_t + c_K)dW^K_t + c_X dW^E_t + c_0 dW^0_t\\ 
&= \Big[ (A_\mathcal{X} + A_\mathcal{Y}P_t) \mathcal{X}_t + \big[(A_\mathcal{Y}(-P_t + Q_t) + \bar{A}_\mathcal{Y}Q_t) \Pi_{1,2} + (A_\mathcal{Y} + \bar{A}_\mathcal{Y})\Pi_{1,2}\mathfrak{Q}_t\big]\mathfrak{X}_t \\
&\qquad + (A_\mathcal{Y} + \bar{A}_\mathcal{Y})\Pi_{1,2} \mathfrak{q}_t+ (A_\mathcal{Y}+ \bar{A}_\mathcal{Y})\varphi_t+ A_\beta \hat\beta(t;\mathfrak{X}_t) + a_\mathcal{X}\Big] dt\\
&\qquad+ (C_\mathcal{X} \mathcal{X}_t + c_K)dW^K_t + c_X dW^E_t + c_0 dW^0_t.
\end{align*}
Once we simulated $\mathfrak{X}$ we can simulate also the trajectory of $\mathcal{X}$.

The optimal controls are then given by:
\begin{align*}
    \hat{v}_t &= -Q_3^{-1} \bigg( q_2(\varpi_t) + \frac{1}{2} A_2^\top \mathcal{Y}_t \bigg)\\
    &= -Q_3^{-1} \bigg\{ q_2(\varpi(t; \mathfrak{X}_t)) + \frac{1}{2} A_2^\top \big[P_t \mathcal{X}_t+ ((-P_t+Q_t)\Pi_{1,2} + \Pi_{1,2}\mathfrak{Q}_t)\mathfrak{X}_t + \Pi_{1,2}\mathfrak{q}_t + \varphi_t\big] \bigg\}.
\end{align*}
Recalling that $\hat{v}_t = (\hat\alpha^f_t, \hat\alpha^g_t, \hat\alpha^e_t, \hat\alpha^x_t)$, we conclude that the emission of the typical agent is
\begin{equation*}
dE_t = (a^e \hat\alpha^{f}_t - \hat\alpha^{e}_t) dt + \sigma^E(\rho dW^{E}_t + \sqrt{1- \rho^2} dW^0_t), \quad E_0 = E^0.
\end{equation*}
More specifically, its expectation satisfies
\begin{equation*}
    \bar{E}_t = \E[E_t \mid \ItF^0_t] = \int_0^t\E[ \hat{\alpha}^{x}_s \mid \ItF^0_s] ds - \bar{X}_t + X^0+ E^0 = -(\hat{e}^5_1 + \hat{e}^5_3)^\top \mathfrak{X}_t + A^0,
\end{equation*}
where $\hat{e}^{j}_i$ is the $i-th$ element of the canonical basis in $\R^j$.

\newpage

\bibliographystyle{alpha}
\bibliography{mfcarbonemission}

@book{liptser2013statistics,
  title={Statistics of random processes: I. General theory},
  author={Liptser, Robert S and Shiryaev, Albert N},
  volume={5},
  year={2013},
  publisher={Springer Science \& Business Media}
}

@book{wang2018introduction,
  title={An introduction to optimal control of FBSDE with incomplete information},
  author={Wang, Guangchen and Wu, Zhen and Xiong, Jie and others},
  year={2018},
  publisher={Springer}
}

@book{carmona2018probabilistic2,
	author = {Carmona, Ren{\'e} and Delarue, Fran{\c{c}}ois},
	edition = {1},
	publisher = {Springer Cham},
	title = {Probabilistic Theory of Mean Field Games with Applications II},
	volume = {II},
	year = {2018}}

@article{aid2023optimal,
	author = {A{\"\i}d, Ren{\'e} and Biagini, Sara},
	journal = {Mathematical Finance},
	number = {1},
	pages = {80--115},
	publisher = {Wiley Online Library},
	title = {Optimal dynamic regulation of carbon emissions market},
	volume = {33},
	year = {2023}}

@article{fujii2022equilibrium,
	author = {Fujii, Masaaki and Takahashi, Akihiko},
	journal = {ESAIM: Control, Optimisation and Calculus of Variations},
	pages = {21},
	publisher = {EDP Sciences},
	title = {Equilibrium price formation with a major player and its mean field limit},
	volume = {28},
	year = {2022}}

@article{carmona2013mean,
author = {Ren{\'e} Carmona and Fran{\c{c}}ois Delarue},
title = {{Mean field forward-backward stochastic differential equations}},
volume = {18},
journal = {Electronic Communications in Probability},
number = {none},
publisher = {Institute of Mathematical Statistics and Bernoulli Society},
pages = {1 -- 15},
year = {2013},
doi = {10.1214/ECP.v18-2446},
URL = {https://doi.org/10.1214/ECP.v18-2446}
}

@article{carmona2015forward,
	author = {Carmona, Ren{\'e} and Delarue, Fran{\c{c}}ois},
	journal = {The Annals of Probability},
	pages = {2647--2700},
	publisher = {JSTOR},
	title = {Forward--backward stochastic differential equations and controlled McKean--Vlasov dynamics},
	year = {2015}}

@article{fujii2022mean,
	author = {Fujii, Masaaki and Takahashi, Akihiko},
	journal = {SIAM Journal on Control and Optimization},
	number = {1},
	pages = {259--279},
	publisher = {SIAM},
	title = {A mean field game approach to equilibrium pricing with market clearing condition},
	volume = {60},
	year = {2022}}

@article{follmer1974random,
	author = {F{\"o}llmer, Hans},
	journal = {Journal of mathematical economics},
	number = {1},
	pages = {51--62},
	publisher = {Elsevier},
	title = {Random economies with many interacting agents},
	volume = {1},
	year = {1974}}

@article{lacker2019mean,
	author = {Lacker, Daniel and Zariphopoulou, Thaleia},
	journal = {Mathematical Finance},
	number = {4},
	pages = {1003--1038},
	publisher = {Wiley Online Library},
	title = {Mean field and n-agent games for optimal investment under relative performance criteria},
	volume = {29},
	year = {2019}}

@article{del2024mean,
	author = {Del Sarto, Gianmarco and Leocata, Marta and Livieri, Giulia},
	journal = {arXiv preprint arXiv:2407.12754},
	title = {A Mean Field Game approach for pollution regulation of competitive firms},
	year = {2024}}

@article{carmona2009optimal,
  title={Optimal stochastic control and carbon price formation},
  author={Carmona, Ren{\'e} and Fehr, Max and Hinz, Juri},
  journal={SIAM Journal on Control and Optimization},
  volume={48},
  number={4},
  pages={2168--2190},
  year={2009},
  publisher={SIAM}
}

@article{carmona2010market,
  title={Market design for emission trading schemes},
  author={Carmona, Ren{\'e} and Fehr, Max and Hinz, Juri and Porchet, Arnaud},
  journal={Siam Review},
  volume={52},
  number={3},
  pages={403--452},
  year={2010},
  publisher={SIAM}
}

@article{kollenberg2019dynamic,
  title={Dynamic supply adjustment and banking under uncertainty in an emission trading scheme: the market stability reserve},
  author={Kollenberg, Sascha and Taschini, Luca},
  journal={European Economic Review},
  volume={118},
  pages={213--226},
  year={2019},
  publisher={Elsevier}
}

@article{chan2017fracking,
  title={Fracking, renewables, and mean field games},
  author={Chan, Patrick and Sircar, Ronnie},
  journal={SIAM Review},
  volume={59},
  number={3},
  pages={588--615},
  year={2017},
  publisher={SIAM}
}

@article{carmona2022mean,
  title={Mean field models to regulate carbon emissions in electricity production},
  author={Carmona, Ren{\'e} and Dayan{\i}kl{\i}, G{\"o}k{\c{c}}e and Lauri{\`e}re, Mathieu},
  journal={Dynamic Games and Applications},
  volume={12},
  number={3},
  pages={897--928},
  year={2022},
  publisher={Springer}
}

@article{dumitrescu2024energy,
  title={Energy transition under scenario uncertainty: a mean-field game of stopping with common noise},
  author={Dumitrescu, Roxana and Leutscher, Marcos and Tankov, Peter},
  journal={Mathematics and Financial Economics},
  volume={18},
  number={2},
  pages={233--274},
  year={2024},
  publisher={Springer}
}

@article{carmona2016,
author = {Ren{\'e} Carmona and Fran{\c{c}}ois Delarue and Daniel Lacker},
title = {{Mean field games with common noise}},
volume = {44},
journal = {The Annals of Probability},
number = {6},
publisher = {Institute of Mathematical Statistics},
pages = {3740 -- 3803},
year = {2016},
doi = {10.1214/15-AOP1060},
URL = {https://doi.org/10.1214/15-AOP1060}
}
\end{document}